\newcommand{\R}{\mathbb{R}}
\newcommand{\ep}{\varepsilon}
\newcommand{\dt}{\partial_t}
\newcommand{\e}{\mathbf{e}_{3}}
\newcommand{\one}{\mathbf{1}}
\newtheorem{lemma}{Lemma}[section]
\newtheorem{theorem}{Theorem}[section]
\newtheorem{definition}{Definition}[section]
\newtheorem{examp}{Example}[section]
\newenvironment{proof}
 {\begin{trivlist}\item[\textit{ Proof:}]\,}
 {$\Box$\end{trivlist}}
\newcommand{\produ}[1]{\langle #1\rangle}
\newcommand{\RR}[1]{\mathrm{e}^{J#1}}
\title{Associated Families of Surfaces in Warped Products and Homogeneous Spaces}
\author{
Marie-Am\'elie Lawn \\
\small Department of Mathematics, Imperial College, London (UK) \\
\small m.lawn@imperial.ac.uk \\[1em]
Miguel Ortega\\
\small Institute of Mathematics IEMath-GR, Department of Geometry and Topology,\\
\small University of Granada, Granada, 18071 SPAIN\\
\small miortega@ugr.es
}
\date{\today}
\begin{document}
\maketitle
\begin{abstract}
We classify Riemannian surfaces admitting associated families in three dimensional homogeneous spaces with four-dimensional isometry groups and in a wide family of (semi-Riemannian) warped products, with an extra natural condition (namely, rotating structure vector field).  We prove that, provided the surface is not totally umbilical, such families exist in both cases if, and only if, the ambient manifold is a product and the surface is minimal. In particular, there exists no associated families of surfaces with rotating structure vector field in the Heisenberg group.
\end{abstract}

\noindent \textbf{MSC2010 Classification:} 53B25, 53B20, 53B30\\
\noindent\textbf{Keywords:} Associated families of surfaces, 3-dim spaces,  (semi-Riemannian) warped products, homogenous spaces, immersions.

\section{Introduction}
Classically, associated families are certain isometric deformations of minimal surfaces in Euclidean three-space, the best known example being the deformation of the catenoid into the
helicoid. A well-known property for such isometric immersions $\chi:M\rightarrow \mathbb{R}^3$ is namely the existence of a so called strong  associated family, i.e. a one-parameter family $\chi_{\theta}$, with $\theta\in S^1$ and $\chi_0=\chi$, of isometric immersions "rotating the differential" and preserving, at every point, the tangent plane and the Gau{\ss} map. More precisely, denoting by $R_{\theta}:TM\rightarrow TM$ the rotation of the tangent plane by an angle $\theta$, a strong  associated family of isometric immersions is a smooth family $\chi_{\theta}$ satisfying $d\chi(R_{\theta})=d\chi_{\theta}$. Moreover it is known that minimality and the existence of an  associated family are equivalent conditions.

A generalization to a larger class of surfaces is given by constant mean curvature surfaces (shortly CMC) and can also be characterized by the existence of an  associated family $\chi_{\theta}$ which is defined as follows. Let $\chi_0: M \to \mathbb{R}^3$ be a CMC surface and let 
$\chi_{\theta}:M\rightarrow \mathbb{R}^3$ be a smooth family of isometric immersions into $\mathbb{R}^3$  with second fundamental forms $A_{\theta}$. Then $\chi_{\theta}$ is an  associated family if, and only if, $A_{\theta}=R_{-\theta}A_0R_{\theta}$. If $M$ is minimal, the operator $R_{\theta}$ anticommutes with $A_0$ and we recover the above strong  associated family. It is a well-known fact that there exists an  associated family if and only $M$ is a CMC surface, which is furthermore equivalent to the harmonicity of the Gau{\ss} map. More generally,  an analogous result holds for CMC surfaces in three-dimensional space forms. It is worth pointing out that the notion of (strong)  associated family was extended in \cite{BEFT} to K\"ahler manifolds in $\mathbb{R}^n$ without significant modifications (see also \cite{E}). The existence of an  associated family for such manifolds is then equivalent to the pluriharmonicity of the Gau{\ss} map.

In recent years, minimal and CMC surfaces in other ambient spaces, such as homogeneous three-spaces and (Lorentzian) warped products, have received a lot of attention. Especially, minimal surfaces in the product spaces $\mathbb{S}^2\times\mathbb{R}$ and $\mathbb{H}^2\times\mathbb{R}$ have been extensively studied and the existence of an  associated minimal family was showed in \cite{D2} (see also \cite{H} and \cite{HST}, and \cite{R} for the Lorentzian case). Nevertheless, no equivalence between the minimality of the surface and the existence of the family had been proven until now. We also mention \cite{K} and \cite{LR} where  associated families of minimal surfaces in (multi)products of space forms where studied.

Daniel considered in \cite{D1} surfaces immersed into the 3-dimensional homogeneous spaces  with four-dimensional isometry group $\mathbb{E}(\kappa,\tau)$. These spaces are well-known to be Riemannian fibrations over a 2-dimensional space form, where $\kappa$  is the curvature of the base surface of the fibration, and $\tau$ the bundle curvature. Daniel showed the existence of a Lawson-type correspondence of CMC surfaces, the so-called \textit{sister surfaces}, but the question of the existence of the  associated family for minimal or CMC surfaces in such geometries when $\tau\neq 0$ remained open. Of particular interest is the case of the Heisenberg group (see for instance \cite{D1}, Remark 5.10). 

Our aim is the classification of surfaces admitting an  associated family in the ambient space $P^3$, where $P^3$ is either a Thurston geometry with four-dimensional isometry group or a semi-Riemannian warped product of the form $P^3=\varepsilon\,I\times\mathbb{M}_k^2$, where $\ep=\pm 1$, $a:I\subset\R\rightarrow\R^+$ is the scale factor and $\mathbb{M}_k^2(c)$, $k\in\{0,1\}$, is the semi-Riemannian space form of index $k$ and constant curvature $c$, excluding in our study the well-understood case where the ambient space $P^3$ is a space form. In fact, these two cases are similar because of the importance in the Thurston Geometries of the unit vertical Killing vector field $\dt$ tangent to the fibers, which corresponds in the case of warped products to the vector in the direction of the factor $\mathbb{R}$. Given a hypersurface $M$, the vector field $\dt$ can be decomposed along $M$ in its tangent and normal parts i.e. $\dt=T+f\nu$, where $\nu$ is the unit vector field normal to $M$. The compatibility equations depend in both cases not only on the shape operator $A$, but also on the tangent vector field $T$ and on the normal component function $f$. Moreover, whereas the Gau{\ss} and Codazzi integrability conditions are well-known to be necessary and sufficient for the existence of isometric immersions of hypersurfaces into space forms,  it was proved in \cite{D1} for the homogeneous case and in \cite{LO} for the warped product case that two additional equations involving the covariant derivative of $T$ and $f$ have to be satisfied on $M$ in order for the immersion to exist.

 Consequently, considering  associated families of surfaces in those spaces, it is necessary to not only rotate the shape operator, but also the vector field $T$. We introduce for that purpose a more general transformation and we call a smooth family $\chi_{\theta}:M\rightarrow P^3$ with second fundamental form $A_{\theta}$ and vector $T_{\theta}$ a \textit{generalized  associated family with rotating structure field $T_{\theta}$,} if, and only if, there exist smooth real functions $F_1$, $F_2$, $\lambda$ and $\mu$, such that
\[
A_{\theta}=F_1(\theta)R_{-\theta}A^aR_{\theta}+F_2(\theta)A^c,\quad 
T_{\theta}=\lambda(\theta) T+\mu(\theta) JT,\]
where $J$ is the rotation of angle $\frac{\pi}{2}$ on $M$ given by the orientation, $R_{\theta}$ is the operator given by $R_{\theta}X = \cos(\theta)X+\sin(\theta)JX$, $X\in TM$, and $A^a$ (resp. $A^c$) are the parts of $A$ anticommuting (resp. commuting) with $J$.  The shape operator $A$ can be uniquely decomposed as $A=A^c+A^a$ where $A^cJ=JA^c$ and $A^aJ=-JA^a$. Note that $R_{-\theta}AR_{\theta} = A^c+R_{-\theta}A^aR_{\theta}$ for any $\theta$. This means that, until now, the crutial part is the anti-commutative one, or rather, the commutative part provides little information. Thus, if  we introduce some auxiliary functions depending only in the parameter $\theta$ and not on the point of the surface, we can also make the commutative part more important, obtaining in addition the same results.

We are then able to prove that, in the case where $P^3$ is an homogeneous space, there exists a generalized  associated family if, and only if, $P^3$ is a product and $M$ is minimal, and in the case of warped products that the existence of such deformations is also equivalent to either $P^3$ being a product and $M$ minimal, or $M$ being totally umbilical. In all cases, the generalized  associated family turns out to be the classical  associated family (with vector $T_{\theta}=e^{-2J\theta}T$). In particular we show that there exists no  associated family of minimal surfaces in the Heisenberg group.

\section{Basics}

Let $(M,g)$ be a connected, oriented Riemannian surface. It is well-known that it admits a complex structure $J$, so that it becomes a Riemann surface $(M,g,J)$. We consider an isometric immersion $\chi:(M,g)\rightarrow (N^3,\produ{,})$ in a semi-Riemannian 3-dimensional manifold, with  second fundamental form $\sigma$. Let $\overline{\nabla}$ be the Levi-Civita connection of $N^3$. Since $M$ is an oriented hypersurface in $N^3$, given a unit normal vector field $\e$, with sign $\ep_3=\produ{\e,\e}=\pm 1$, we have the corresponding shape operator $A$.  For each $\theta\in\mathbb{R}$, we define the operator on $M$ by $R_{\theta}X=\cos(\theta)X+\sin(\theta)JX=e^{J\theta}X$,  for any $X\in TM$. We consider now smooth one-parameter families $\chi_{\theta}:(M,g)\longrightarrow (N,\produ{,})$ of isometric immersions, with their corresponding unit normal vector fields $\e^{\theta}$, signs $\ep^{\theta}_3=\produ{\e^{\theta},\e^{\theta}}=\pm 1$ and shape operators $A_{\theta}$. For this family to be smooth, $\ep_3=\ep_3^{\theta}$ for any $\theta \in \R$.
\begin{definition} Such a family will be called \textrm{ associated family with $\chi$} if their shape operators satisfy $A_{\theta} = 
\mathrm{e}^{-J\theta}A\RR{\theta}$.
\end{definition}
The classical definition  states that the second fundamental forms $\sigma_{\chi_{\theta}}$ satisfy 
$\sigma_{\chi_{\theta}}(X,Y)=\sigma(R_{\theta} X, R_{\theta} Y),$ which is clearly equivalent to the above definition. For further references see for example \cite{E}.

\begin{definition}\label{generalized} A smooth map $\tilde{\chi}:\R\times M\rightarrow N$ will be called {\normalfont a generalized  associated family with $\chi$} if the following conditions hold:
\begin{enumerate}
\item $\tilde{\chi}(0,\cdot)=\chi$;
\item for each $\theta\in\R$, $\chi_{\theta}=\tilde{\chi}(\theta,\cdot):(M,g)\rightarrow(N,\produ{,})$ is a smooth isometric immersion;
\item there exist two smooth functions $F_1,F_2:\R\rightarrow\R$ satisfying $F_1(0)=F_2(0)=1$ and the shape operators satisfy $A_{\theta} = F_1(\theta)\mathrm{e}^{-J\theta}A^a\RR{\theta}+F_2(\theta)A^c$ for any $\theta\in\R$.
\end{enumerate}
\end{definition}
Note that whenever $F_1(\theta)=F_2(\theta)	=1$ everywhere,  we recover the action $A_{\theta}=\mathrm{e}^{-J\theta}A\RR{\theta}$.

Let $\nabla$ be the Levi-Civita connection of $M$. We call $\sigma_{\theta}$ the second fundamental form of $\chi_{\theta}$. Next, we recall the Gau\ss\, formula: 
\[\overline{\nabla}_XY=\nabla_XY+\sigma_{\theta}(X,Y)=\nabla_XY+\ep^{\theta}\produ{A_{\theta}X,Y}\e^{\theta},\]
for any $X,Y\in TM$. In addition, let $H=\mathrm{tr}(A)/2$, and $H_{\theta}=\mathrm{tr}(A_{\theta})/2$ be the mean curvatures of the immersions $\chi$ and $\chi_{\theta}$, respectively.

\begin{lemma}\label{remark_cmc}
For any $X\in TM$, $JAX+AJX=2HJX$ holds.
\end{lemma}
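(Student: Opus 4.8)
The plan is to exploit the canonical decomposition $A=A^c+A^a$ just introduced, together with the fact that $M$ is two-dimensional. Recall that the shape operator $A$ is self-adjoint while $J$ is a skew-adjoint complex structure, i.e. $J^2=-\mathrm{id}$ and $\produ{JX,Y}=-\produ{X,JY}$. First I would observe that both $A^c$ and $A^a$ inherit self-adjointness from $A$: this is immediate from the explicit formulas $A^c=\tfrac12(A-JAJ)$ and $A^a=\tfrac12(A+JAJ)$ (which manifestly satisfy $A^cJ=JA^c$ and $A^aJ=-JA^a$) combined with the skew-adjointness of $J$.

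The key step is then to identify the commuting part $A^c$ completely. On a two-dimensional oriented inner-product space the operators commuting with $J$ form the two-parameter family $\{a\,\mathrm{id}+bJ\}$, and self-adjointness forces $b=0$ since $J$ is skew-adjoint; hence $A^c=\lambda\,\mathrm{id}$ for some function $\lambda$ on $M$. To pin down $\lambda$ I would take traces: the anticommuting part is trace-free, because $A^aJ=-JA^a$ gives $\mathrm{tr}(A^a)=\mathrm{tr}(J^{-1}A^aJ)=-\mathrm{tr}(A^a)$, whence $\mathrm{tr}(A^a)=0$. Therefore $\mathrm{tr}(A)=\mathrm{tr}(A^c)=2\lambda$, so $\lambda=H$ and $A^c=H\,\mathrm{id}$.

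With this in hand the lemma follows at once. Writing $A=H\,\mathrm{id}+A^a$, one computes
\[ JA+AJ=2HJ+(JA^a+A^aJ)=2HJ, \]
since the anticommuting part cancels; evaluating at $X$ yields $JAX+AJX=2HJX$.

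I do not expect a genuine obstacle here, as the statement is elementary; the only substantive point is the identification $A^c=H\,\mathrm{id}$, which rests entirely on $\dim M=2$ (in higher dimensions the commuting part need not be scalar and the identity would fail). As a cross-check I would also verify the conclusion directly in a positively oriented orthonormal frame $\{e_1,Je_1\}$, where $A$ is a symmetric $2\times2$ matrix and a one-line multiplication gives $JA+AJ=(\mathrm{tr}\,A)\,J=2HJ$, in agreement with the intrinsic argument.
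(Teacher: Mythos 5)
Your proof is correct, but it inverts the logical route the paper takes. The paper's proof is a one-line frame computation: since $\dim M=2$, the pair $(X,JX)$ is an orthonormal frame for any unit $X$, so $2H=\produ{AX,X}+\produ{AJX,JX}$, and expanding $JAX+AJX$ in that frame (the off-diagonal terms cancel by self-adjointness of $A$) gives the identity directly — this is exactly your final ``cross-check.'' You instead first establish the structural fact $A^c=H\,\mathbf{1}$ (via the commutant of $J$ being $\{a\,\mathbf{1}+bJ\}$, self-adjointness killing $b$, and trace-freeness of $A^a$) and then read off the lemma from $A=H\,\mathbf{1}+A^a$. Note that the paper does the opposite: it proves the lemma first and then, in Section 3, deduces $A^c=H\mathbf{1}$, $A^a=A-H\mathbf{1}$ \emph{from} the lemma. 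Your argument is not circular, since your identification of $A^c$ uses only the explicit formulas $A^c=\tfrac12(A-JAJ)$, $A^a=\tfrac12(A+JAJ)$ and elementary trace/adjointness facts, never the lemma itself; what it buys is those explicit formulas and the uniqueness of the decomposition, at the cost of being longer than the paper's two-line computation. Both arguments hinge on the same essential point, which you correctly isolate: $\dim M=2$ forces the $J$-commuting self-adjoint part to be scalar, and this is precisely what fails in higher dimensions.
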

\begin{proof} Since $\dim M=2$, for any unit vector (field) $X$ on the surface, this yields $2H=\produ{AX,X}+\produ{AJX,JX}$. We readily obtain the lemma.
\end{proof}

\subsection{3-dimensional Homogeneous Manifolds with 4-dim Isometry Group}
We now consider surfaces immersed in a 3-dim homogeneous Riemannian manifold $\mathbb{E}$ whose isometry group has dimension 4. The classification of such manifolds is well-known and depends on two parameters, namely the curvature $\kappa$ of the base surface of the fibration and the bundle curvature $\tau$, where $\kappa$ and $\tau$ are real numbers and $\kappa\neq 4\tau^2$. B.~Daniel gave in \cite{D2} a fundamental theorem for surfaces in such spaces, which we recall.
\begin{theorem} \cite{D2}\label{fundTHhomo}
Let $M$ be a simply connected oriented Riemannian surface with connection $\nabla$, $J$ the rotation angle $\frac{\pi}{2}$ on $TM$, $A$ a self-adjoint $(1,1)$-tensor, $T$ a vector field  on $M$, and $f$ a smooth real valued function such that $\|T\|^2+f^2=1$. Let $\kappa$ and $\tau$ be real numbers such that $\kappa\neq 4\tau^2$. Then there exists an isometric immersion $\chi:M\rightarrow\mathbb{E}$ if, and only if, the data $(A,T,f)$ satisfy the following structure equations, where $K$ is the Gauss curvature of $M$.
\begin{eqnarray}
K&=&\det A+\tau^2+(\kappa-4\tau^2)(1-\|T\|^2) \label{Gausshomo}\\
(\nabla_XA)Y-(\nabla_Y A)X&=&(\kappa-4\tau^2)f(\langle Y,T\rangle X-\langle X,T\rangle Y) \label{Codazzihomo}\\
\nabla_XT&=&f(AX-\tau JX)\label{eqforThomo}\\
X(f)&=&-\langle AX,T \rangle +\langle\tau JX,T\rangle \label{eqforX(f)}
\end{eqnarray}
The operator $A$ turns out to be the shape operator of the immersion and $T$ is the part of $\partial_t$ tangent to the surface $M$.
\end{theorem}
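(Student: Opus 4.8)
This is a Bonnet-type fundamental theorem, so the plan is to prove the two implications separately and, for the harder converse, to reduce existence to the integration of a flat connection on the simply connected surface $M$. For the necessity direction, suppose the immersion $\chi:M\to\mathbb{E}$ exists. The geometry of $\mathbb{E}(\kappa,\tau)$ is controlled by its vertical unit Killing field $\xi=\dt$, which satisfies $\overline{\nabla}_X\xi=\tau\,(X\times\xi)$ for all $X$, and by its curvature tensor $\overline{R}$, which has an explicit expression in terms of $\kappa$, $\tau$ and $\xi$ (a constant-curvature part plus a correction built from $\xi$). First I would write $\xi=T+f\e$ along $M$ and feed this decomposition into the three classical ambient identities. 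Inserting the explicit $\overline{R}$ into the Gauss equation $K=\det A+\overline{K}$, where $\overline{K}$ is the ambient sectional curvature along $T_pM$, produces the correction $\tau^2+(\kappa-4\tau^2)(1-\|T\|^2)$ and hence \eqref{Gausshomo}; taking the tangential part of $\overline{R}(X,Y)\e$ in the Codazzi equation $(\nabla_XA)Y-(\nabla_YA)X=(\overline{R}(X,Y)\e)^{\top}$ gives \eqref{Codazzihomo}. Finally, differentiating $\xi=T+f\e$ along $X$ and using the Weingarten formula $\overline{\nabla}_X\e=-AX$ yields $\overline{\nabla}_X\xi=(\nabla_XT-fAX)+(\produ{AX,T}+X(f))\e$; equating this with $\tau(X\times\xi)$ and separating tangential and normal parts, using that $X\times\e=JX$ in the chosen orientation, gives exactly \eqref{eqforThomo} and \eqref{eqforX(f)}, and identifies $A$ as the shape operator and $T$ as the tangential part of $\dt$.

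For the sufficiency direction I would realize $\mathbb{E}(\kappa,\tau)=G/H$ as the homogeneous space of its four-dimensional isometry group $G$ and construct $\chi$ by integration. From the data $(A,T,f)$ and an adapted orthonormal coframe on $M$ I would assemble a $\mathfrak{g}$-valued $1$-form $\omega$ whose components encode the Levi-Civita connection $\nabla$, the bilinear form $\produ{A\cdot,\cdot}$, and the vertical data $(T,f)$ exactly as dictated by the formulas above. The decisive point is to show that the four structure equations \eqref{Gausshomo}--\eqref{eqforX(f)} are together equivalent to the Maurer-Cartan equation $d\omega+\tfrac12[\omega,\omega]=0$; since $M$ is simply connected this integrates to a map $F:M\to G$ with $F^{*}\theta_G=\omega$ for the Maurer-Cartan form $\theta_G$, and $\chi=\pi\circ F:M\to G/H=\mathbb{E}$ is the required immersion, whose isometry, shape operator $A$ and relation $\dt=T+f\e$ can all be read off from $F^{*}\theta_G=\omega$.

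The step I expect to be most delicate is this last equivalence: it requires an explicit bookkeeping of the bracket relations of $\mathfrak{g}$ (equivalently, of $\overline{R}$ together with $\overline{\nabla}_X\xi=\tau\,X\times\xi$) and a term-by-term matching of the $\mathfrak{g}$-components of $d\omega+\tfrac12[\omega,\omega]$ against the four equations, with the hypothesis $\kappa\neq4\tau^2$ entering precisely to ensure that $G$ is four-dimensional so that this model is the correct one. A coordinate-free alternative, which avoids choosing a frame, is to work on the trivial bundle $E=TM\oplus\R\e$ with the metric extended so that $\e$ is a unit normal, to define a metric connection on $E$ by the Gauss-Weingarten formulas together with an $E$-valued section modeling $\xi$, and to show that the structure equations force this connection to have the same curvature as $\mathbb{E}(\kappa,\tau)$; a standard existence theorem for immersions into homogeneous spaces then produces $\chi$. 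Either route reduces the theorem to integrating a flat connection over the simply connected $M$.
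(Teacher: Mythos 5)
This theorem is not proved in the paper at all: it is recalled verbatim as Daniel's fundamental theorem for surfaces in $\mathbb{E}(\kappa,\tau)$, so there is no internal proof to compare yours against. (Incidentally, the result as stated, with general $\tau$ and the hypothesis $\kappa\neq 4\tau^2$, is the one proved in \cite{D1}; the citation \cite{D2} attached to the statement is the companion paper on the product case $\mathbb{S}^n\times\mathbb{R}$, $\mathbb{H}^n\times\mathbb{R}$, where Daniel instead embeds the ambient space in a flat space and invokes the classical Bonnet theorem.) Measured against Daniel's original argument for $\mathbb{E}(\kappa,\tau)$, your outline is correct and follows essentially the same route. The necessity half---writing $\dt=T+f\e$, inserting the explicit curvature tensor of $\mathbb{E}(\kappa,\tau)$ into the Gauss and Codazzi equations, and splitting $\overline{\nabla}_X\dt=\tau\,X\times\dt$ into tangential and normal parts to obtain \eqref{eqforThomo} and \eqref{eqforX(f)}---is exactly the derivation in the source, and your computation $\overline{\nabla}_X\dt=(\nabla_XT-fAX)+(\produ{AX,T}+X(f))\e$ is right. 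The sufficiency half, assembling a $\mathfrak{g}$-valued one-form from $(\nabla,A,T,f)$ and integrating the Maurer--Cartan equation over the simply connected $M$, is the standard Cartan method for maps into homogeneous spaces and is equivalent to the Frobenius-type integrability argument Daniel carries out in the explicit model of $\mathbb{E}(\kappa,\tau)$; you correctly identify that $\kappa\neq4\tau^2$ is what guarantees the four-dimensional isometry group, i.e.\ that this model (rather than a space form) is the right one. The step you flag as delicate---the term-by-term equivalence of \eqref{Gausshomo}--\eqref{eqforX(f)} with flatness of the assembled form, together with checking that $\pi\circ F$ is an isometric immersion inducing precisely the data $(A,T,f)$---is indeed where all the work lies and remains only sketched in your proposal, but the architecture is sound and matches the known proof.
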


\subsection{Warped products}

The authors proved in \cite{LO} a fundamental theorem for hypersurfaces in some warped products, which we recall for the case of surfaces. We choose numbers $\ep\in\{-1,1\}$,  $c\in\{-1, 0, 1\}$ and $k\in\{0,1\}$. Consider $(\mathbb{M}_k^2(c),g_o)$  the 2-dimensional space form of index $k\in\{0,1\}$ and sectional curvature $c$. Given a smooth positive function $a:\mathbb{M}_k^2\rightarrow (0,\infty)$, we define the warped product $(P^{3}=I\times \mathbb{M}_k^2(c),\produ{,}_1=\ep dt^2+a^2g_o)$, with projection $\pi_I:P^3\rightarrow I$. 
We are going to study Riemannian surfaces in $P^3$. Then,  the number $\varepsilon_3$ will represent the causal character of a normal vector field $e_3$ to the surface, and therefore we can choose among the following options: If $k=0$ and $\varepsilon=+1$, then $\varepsilon_3=+1$; if $k=0$ and $\varepsilon=-1$, then $\varepsilon_3=-1$; if $k=1$, then $\varepsilon=+1$ and $\varepsilon_3=-1$. When $k=0$, $\varepsilon=-1$, $P^3$ cannot contain spacelike surfaces. 
\begin{theorem}\cite{LO} \label{fundtheoremwp}Let $M$ be a simply connected oriented Riemannian surface with Levi-Civita connection $\nabla$, Gauss curvature $K$, a self-adjoint $(1,1)$-tensor $A$,  a vector field $T$ on $M$, and  two smooth real valued functions $f$, $\pi$ on $M$ such that $\|T\|^2+\ep_3f^2=\varepsilon$ and $T=\ep\mathrm{grad}(\pi)$.  Then, there exists an isometric immersion $\chi:M\rightarrow P^{3}$ such that $\pi_I\circ \chi=\pi$ if, and only if, data  $(K,A,T,f)$ satisfy the following structure equations:
\begin{align}
&K=\varepsilon_3\det A-
 \varepsilon\left(\frac{(a')^2}{a^2}-\frac{\ep c}{a^2}  \right)
 -\left(\frac{a''}{a} -\frac{(a')^2}{a^2}+\frac{\ep\,c}{a^2}\right)
 \|T\|^2,\label{Gausswp}\\
&(\nabla_XA) Y - (\nabla_YA) X =\varepsilon_3
\left(\frac{a''}{a} -\frac{(a')^2}{a^2}+\frac{\ep\,c}{a^2}\right)f \Big(\langle Y,T\rangle X -\langle X,T\rangle Y\Big),\label{Codazziwp}\\
&\nabla_XT=fAX+\frac{a'}{a}(X-\ep\langle X,T\rangle T),\label{Tcondwp}\\
 & X(f)=-\produ{AX,T}-\ep\frac{a'}{a} f\langle X,T\rangle, \label{X(f)wp}
\end{align}
for any $X,Y\in TM$, where $a\equiv a\circ \pi$, $a\rq{}\equiv a\rq{}\circ\pi$.
\end{theorem}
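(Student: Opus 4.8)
The plan is to establish this as a Bonnet-type theorem, proving necessity by direct computation and existence by a bundle-with-connection (developing-map) argument; the existence direction is the substantial one. For necessity, the one structural fact I would exploit is that in $P^3=I\times_a\mathbb{M}^2_k(c)$ the coordinate field $\dt$ is \emph{concircular}: O'Neill's warped-product connection formulas give $\overline{\nabla}_Z\dt=\frac{a'}{a}\bigl(Z-\ep\produ{Z,\dt}\dt\bigr)$ for all $Z$, with $\produ{\dt,\dt}=\ep$. Writing the ambient decomposition $\dt=T+f\e$ along $\chi$ (so that $\|T\|^2+\ep_3 f^2=\ep$ is just $\produ{\dt,\dt}=\ep$), applying the Gauss and Weingarten formulas to $\overline{\nabla}_X\dt$ and separating tangential and normal parts yields exactly \eqref{Tcondwp} and \eqref{X(f)wp}, while differentiating $\pi=\pi_I\circ\chi$ gives $T=\ep\,\mathrm{grad}(\pi)$. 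Feeding the warped-product curvature tensor of $P^3$ into the classical Gauss and Codazzi equations of the immersion, and using that the fibre has constant curvature $c$, produces \eqref{Gausswp} and \eqref{Codazziwp}; the coefficients $\frac{(a')^2}{a^2}-\frac{\ep\ep_0}{a^2}$ and $\frac{a''}{a}-\frac{(a')^2}{a^2}+\frac{\ep\ep_0}{a^2}$ are, up to sign, the fibre and mixed sectional curvatures of the ambient space.

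For the converse the data $(K,A,T,f)$ are abstract and the immersion must be manufactured. Since $M$ is simply connected and $T=\ep\,\mathrm{grad}(\pi)$, the height $\pi:M\to I$ is prescribed up to a constant, so the $I$-component of the sought $\chi$ is already fixed and it remains to build the $\mathbb{M}^2_k(c)$-component. I would set up the standard rank-three bundle $\mathcal E=TM\oplus\R\,\xi$, endowed with the metric extending $g$ by $\produ{\xi,\xi}=\ep_3$ and $\xi\perp TM$, and with the metric connection $\widehat\nabla$ modelled on the ambient Levi-Civita connection through $\widehat\nabla_XY=\nabla_XY+\ep_3\produ{AX,Y}\xi$ and $\widehat\nabla_X\xi=-AX$. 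The section $\mathcal T=T+f\xi$ then plays the role of $\dt$: a short computation using \eqref{Tcondwp} and \eqref{X(f)wp} gives $\widehat\nabla_X\mathcal T=\frac{a'}{a}\bigl(X-\ep\produ{X,\mathcal T}\mathcal T\bigr)$, so $\mathcal T$ is concircular in $(\mathcal E,\widehat\nabla)$ with $\produ{\mathcal T,\mathcal T}=\ep$, exactly mirroring the defining property of $\dt$ in $P^3$.

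The heart of the proof is then to verify that $(\mathcal E,\widehat\nabla,\mathcal T)$ reproduces the curvature of $\chi^*TP^3$. I would compute the curvature $\widehat R$ of $\widehat\nabla$ and check, using \eqref{Gausswp} and \eqref{Codazziwp} together with the concircularity of $\mathcal T$, that $\widehat R$ agrees term by term with the pullback of the warped-product curvature tensor under the bundle identification sending $\mathcal T\mapsto\dt$ and the $g$-orthogonal complement of $T$ to the fibre directions. Once this matching holds, the developing-map argument over the simply connected $M$ (vanishing of the relevant deformation tensor) produces $\chi:M\to P^3$ whose induced connection, shape operator and decomposition of $\dt$ are the prescribed $(\nabla,A,T,f)$, with $\pi_I\circ\chi=\pi$ built in through $\mathcal T$. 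I expect the main obstacle to be precisely this curvature-matching step: the Gauss and Codazzi equations alone do not control the mixed components of $\widehat R$ involving $\mathcal T$, which is exactly where \eqref{Tcondwp} and \eqref{X(f)wp} become indispensable, and one must carry the argument uniformly across the degeneracy locus $\{f=0\}$, where $\dt$ is tangent to $M$ and the naive reduction to an isometry into the fibre $\mathbb{M}^2_k(c)$ breaks down.
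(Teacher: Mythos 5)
First, a point of order: the paper you were given never proves Theorem \ref{fundtheoremwp}; it is quoted from \cite{LO}, so your attempt has to be measured against the proof given there. Your necessity direction is correct and is the standard computation: $\dt$ is concircular by the warped-product connection formulas, so decomposing $\overline{\nabla}_X\dt$ with $\dt=T+f\e$ via the Gauss and Weingarten formulas yields \eqref{Tcondwp} and \eqref{X(f)wp} (up to the $\ep_3$-convention in the Weingarten formula), and inserting the warped-product curvature tensor into the classical Gauss and Codazzi equations yields \eqref{Gausswp} and \eqref{Codazziwp}. Likewise, your bundle $(\mathcal{E}=TM\oplus\R\,\xi,\widehat{\nabla})$ and the verification that \eqref{Tcondwp}--\eqref{X(f)wp} make $\mathcal{T}=T+f\xi$ concircular with $\langle\mathcal{T},\mathcal{T}\rangle=\ep$ are correct, and genuinely belong to any proof of the converse.

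The gap is the final step, and it is not a formality: there is no off-the-shelf \emph{developing-map argument} for this target. Such arguments (Cartan--Ambrose--Hicks, and the Bonnet theorem they generalize) rely on the ambient space being a space form or at least homogeneous, so that its curvature tensor is the same algebraic expression in the metric at every point. A generic warped product $\ep I\times_a\mathbb{M}^2_k(c)$ is not homogeneous: the coefficients $a'/a$, $a''/a$, $\ep\ep_0/a^2$ of its curvature tensor depend on the height $t$. Your ``curvature matching'' is therefore not even well posed until you specify at which point $y\in P^3$ these coefficients are evaluated: a linear isometry $\phi:\mathcal{E}_x\to T_yP^3$ with $\phi(\mathcal{T}_x)=\dt|_y$ matches $\widehat{R}$ with the curvature of $P^3$ only when, in addition, $t(y)=\pi(x)$, and the condition $\phi(\mathcal{T}_x)=\dt|_y$ carries no information whatsoever about $t(y)$. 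Hence the integration must be performed on the constraint bundle $\{(x,y,\phi)\,:\, t(y)=\pi(x),\ \phi(\mathcal{T}_x)=\dt|_y\}$, and one has to prove that the natural Frobenius distribution is tangent to this constraint set; that tangency is exactly where \eqref{Tcondwp}--\eqref{X(f)wp} and $T=\ep\,\mathrm{grad}(\pi)$ are consumed --- not, as you predict, the curvature matching, which for a surface is precisely \eqref{Gausswp} plus \eqref{Codazziwp} and nothing more. Your proposal instead treats $\pi_I\circ\chi=\pi$ as an output (``built in through $\mathcal{T}$''), whereas it must be imposed as a constraint throughout the integration for integrability to hold at all. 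This is the missing idea, and supplying it is the actual content of \cite{LO}, where the existence half is obtained not by a developing map but by verifying the hypotheses of a general existence theorem for structure-preserving affine immersions (in the framework of G-structures and their inner torsion, following Piccione--Tausk), in which $\dt$ and the height function are carried along as part of the structure being matched. Finally, your worry about the locus $\{f=0\}$ is misplaced: nothing in the bundle formulation degenerates there; that set is problematic only if one tries to reduce the problem to a map into the fiber $\mathbb{M}^2_k(c)$, which the correct argument never does.
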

We refer to \cite{CX} for the case $c=0$.
In such case, the operator $A$ turns out to be the shape operator
of the immersion, $\ep_3$ is the causal character of the normal vector field along $\chi$, and  
$T$ is the part of $\partial_t$ tangent to the manifold. Equations \eqref{Gausswp} and \eqref{Codazziwp} force to use the notation $c=\pm 1$ or ($c=0$, $\varepsilon=+1$). In the non-flat case $c\neq 0$, this number represents the (normalized) Gaussian curvature of the fibers.  Moreover, notice that equation \eqref{Gausswp} can be rewritten as
\[K=\varepsilon_3\det A-
 \varepsilon\frac{a''}{a}
 +\left(\frac{a''}{a} -\frac{a'^2}{a^2}+\frac{\ep\,c}{a^2}\right)
 (\ep-\|T\|^2).\]
As an additional remark, the authors showed in \cite{LO} that if we set $\eta(X)=\produ{X,T}$, for any $X\in TM$, we get $\mathrm{d}\eta=0$, where $\eta = \ep\mathrm{d}\pi$. In other words, one can replace the condition of the choice of $T$ in Theorem \ref{fundtheoremwp} by the choice of the function $\pi$. In this paper, we choose to use $T$.

\begin{lemma} The (maximal) solutions to the equation $a''a-(a')^2+\ep c=0$ are the following:
\begin{enumerate}
\item If $\ep c=-1$, then $a(t)=C_1^{-1}\cosh(C_1 t+C_2)$, for some $C_1,C_2\in\R$, $C_1\neq 0$.
\item If $\ep c=1$, then $a(t)=\pm t+C_2$ or $a(t)=C_1^{-1}\sin(C_1 t+C_2)$ or $a(t)=C_1^{-1}\sinh(C_1 t+C_2)$, for some $C_1,C_2\in\R$, $C_1\neq 0$.
\item If $c=0$, then $a(t)=C_1\exp(C_2t)$, for some $C_1,C_2\in\R$. 
\end{enumerate}
\end{lemma}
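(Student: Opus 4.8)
The plan is to remove the nonlinearity by first producing a conserved quantity. Since $a>0$ throughout its (connected) domain, I would introduce
\[
C(t)=\frac{(a')^2-\ep\ep_{0}}{a^2}
\]
and differentiate. A direct application of the quotient rule gives
\[
C'(t)=\frac{2a'\big(a''a-(a')^2+\ep\ep_{0}\big)}{a^{3}},
\]
so the equation itself forces $C'\equiv 0$. Hence there is a constant $C$ with $(a')^2=\ep\ep_{0}+Ca^2$ holding on the whole domain, and the advantage of this formulation is that it has no exceptional behaviour at the critical points of $a$. Differentiating the first integral yields $2a'a''=2Caa'$, so $a''=Ca$ wherever $a'\neq 0$. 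Since $a$ cannot be constant on any interval — that would force $\ep\ep_{0}=0$, impossible as $\ep\ep_{0}=\pm 1$ — the zeros of $a'$ are isolated, and by continuity $a''=Ca$ holds everywhere.

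Next I would solve the \emph{linear} equation $a''=Ca$ according to the sign of $C$, and then reinsert the constraint $(a')^2-Ca^2=\ep\ep_{0}$ together with $a>0$ to pin down the integration constants. If $C=C_1^2>0$, the general solution is $a=\alpha\,\mathrm{e}^{C_1 t}+\beta\,\mathrm{e}^{-C_1 t}$, and substituting into the first integral gives the algebraic condition $-4C_1^2\alpha\beta=\ep\ep_{0}$. When $\ep\ep_{0}=-1$ this means $\alpha\beta>0$, and positivity of $a$ (both coefficients positive) lets me write $\alpha=\tfrac{1}{2C_1}\mathrm{e}^{C_2}$, $\beta=\tfrac{1}{2C_1}\mathrm{e}^{-C_2}$, yielding $a=C_1^{-1}\cosh(C_1 t+C_2)$; when $\ep\ep_{0}=1$ it means $\alpha\beta<0$, giving $a=C_1^{-1}\sinh(C_1 t+C_2)$ on the subinterval where this is positive. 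If $C=0$, then $a=\alpha t+\beta$ and $(a')^2=\ep\ep_{0}$ forces $\ep\ep_{0}=1$ with $\alpha=\pm 1$, that is $a=\pm t+C_2$. Finally, if $C=-C_1^2<0$, writing $a=R\sin(C_1 t+C_2)$ in amplitude–phase form and using $(a')^2-Ca^2=R^2C_1^2=\ep\ep_{0}$ forces $\ep\ep_{0}=1$ and $R=C_1^{-1}$, hence $a=C_1^{-1}\sin(C_1 t+C_2)$, again on a maximal interval of positivity.

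Collecting these cases recovers exactly the statement: for $\ep\ep_{0}=-1$ only the $\cosh$ family survives, while for $\ep\ep_{0}=1$ precisely the three families $\pm t+C_2$, $C_1^{-1}\sinh(C_1 t+C_2)$ and $C_1^{-1}\sin(C_1 t+C_2)$ appear.

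I do not expect a serious analytic difficulty here; the one genuinely clever move is spotting the conserved quantity $C(t)$, after which the problem collapses to a constant-coefficient linear ODE. The points that require actual care, rather than routine integration, are twofold: first, justifying that the first integral is valid \emph{globally}, including across the isolated zeros of $a'$ (handled above by differentiating $C$ rather than separating variables, which would break down at critical points); and second, the sign and positivity bookkeeping needed to land on precisely the normalised forms in the statement and to identify the maximal domain — for the $\sin$ and $\sinh$ branches the solution must be restricted to a maximal interval on which $a>0$, whereas the $\cosh$ branch is automatically positive on all of $\R$.
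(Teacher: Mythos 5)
Your proof is correct. Note that the paper itself offers no proof of this lemma at all (it is stated as a known fact, immediately followed by the identification of the corresponding warped products with space forms), so there is no argument to compare against; your write-up supplies the missing justification, and the route you take — the conserved quantity $C(t)=\bigl((a')^2-\ep\ep_{0}\bigr)/a^{2}$, reduction to the linear equation $a''=Ca$, then a sign analysis of $C$ against the constraint $(a')^2-Ca^2=\ep\ep_{0}$ with $a>0$ — is exactly the standard and complete way to do it. One small wording repair: the inference ``$a$ is not constant on any interval, hence the zeros of $a'$ are isolated'' is not valid as stated (a nonconstant function can have non-isolated critical points). What your continuity argument actually needs is only that $\{a'\neq 0\}$ is dense, and that does follow from your stated reason, since the zero set of $a'$ is closed and contains no interval. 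Alternatively, isolation itself is true here for a better reason: at any zero $t_0$ of $a'$ the original equation gives $a''(t_0)=-\ep\ep_{0}/a(t_0)\neq 0$, so every critical point of $a$ is nondegenerate. With either patch the argument is airtight, and the case bookkeeping (positivity of $a$, absorption of signs into $C_1$, $C_2$, and maximality of the intervals for the $\sin$ and $\sinh$ branches) matches the statement exactly.
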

Among the warped products we are considering, it is well-known that those associated with these solutions are isometric to (open subsets) of space forms. Here, the symbol $\cong$ means \textit{isometric to an open subset}. Then,  $-\mathbb{R}\times_{\cosh(t)} \mathbb{S}^2\cong dS^3_1$, the  De Sitter $3$-space;  $\mathbb{R^+}\times_{\sinh(t)} \mathbb{H}^2\cong \mathbb{H}^3\cong\mathbb{R}\times_{\cosh(t)} \mathbb{H}^2$, the Hyperbolic $3$-space; $(0,\pi)\times_{\sin(t)}\mathbb{S}^2\cong \mathbb{S}^3\backslash\{North,South\}$, the round $3$-Sphere without two antipodal points;   $\mathbb{R}^+\times_t\mathbb{S}^2\cong\mathbb{R}^3\backslash\{0\}$ and
$-\mathbb{R}^+\times_t\mathbb{H}^2\cong\mathbb{L}^3$, the Minkowski $3$-space; $\mathbb{R}\times_{e^t}\mathbb{R}^2\cong\mathbb{H}^3$, the  hyperbolic $3$-space; $-\mathbb{R}\times_{e^t}\mathbb{R}^2\cong d{S}_1^3$, the De Sitter $3$-space. Since surfaces in space forms are well understood, we will exclude them in the following discussion. 

\section{Generalized  associated Families}

Let $(M,\produ{,},J)$ be a Riemann surface. We consider $(\mathbb{E},\produ{,})$ a 3-dimensional manifold. Assume that there exists 
an isometric immersion $\chi:(M,\produ{,})\rightarrow (\mathbb{E},\produ{,})$. Let $A$ be the shape operator of the immersion. By Lemma \ref{remark_cmc}, it is very easy to check that
\[ A^c = H\one, \quad A^a=A-H\one, 
\]
where $\one$ is the identity map on $TM$, and $H=\mathrm{tr}(A)/2$ is the mean curvature function. 
\begin{lemma} \label{generalizada} 
Definition \ref{generalized} is equivalent to  
\[ A_{\theta} =  F_1(\theta)\mathrm{e}^{-2J\theta}(A-H\one)+F_2(\theta)H\one,\]
where $\theta\in\R$. 
\end{lemma}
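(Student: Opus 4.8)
The plan is to substitute the explicit expressions for the commuting and anticommuting parts of $A$ into Definition \ref{generalized} and then to exploit the anticommutation relation $A^aJ=-JA^a$. Recall that, as noted just above the statement, Lemma \ref{remark_cmc} gives $A^c=H\one$ and $A^a=A-H\one$. Plugging these into condition (3) of Definition \ref{generalized} immediately rewrites the commuting term as $F_2A^c=F_2H\one$, so that the only thing left to verify is that the conjugated anticommuting term satisfies $\mathrm{e}^{-J\theta}A^a\RR{\theta}=\mathrm{e}^{-2J\theta}A^a$.

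The key algebraic step is to slide $A^a$ past the rotation $\RR{\theta}=\cos(\theta)\one+\sin(\theta)J$. Since $A^a$ anticommutes with $J$, one computes $A^a\RR{\theta}=\cos(\theta)A^a+\sin(\theta)A^aJ=\cos(\theta)A^a-\sin(\theta)JA^a=\mathrm{e}^{-J\theta}A^a$. Composing on the left with $\mathrm{e}^{-J\theta}$ and using $J^2=-\one$ then yields $\mathrm{e}^{-J\theta}A^a\RR{\theta}=\mathrm{e}^{-2J\theta}A^a=\mathrm{e}^{-2J\theta}(A-H\one)$, which is exactly the anticommuting part of the claimed formula. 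Combining the two pieces gives $A_{\theta}=F_1\mathrm{e}^{-2J\theta}(A-H\one)+F_2H\one$, so condition (3) is equivalent to the stated identity. Because every manipulation is a chain of reversible equalities, the converse direction follows by reading the same computation backwards, and no separate argument is required.

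I do not expect any genuine obstacle here: beyond the direct substitution, the only ingredient is the elementary observation that anticommuting with $J$ converts $\RR{\theta}$ into $\mathrm{e}^{-J\theta}$, together with $J^2=-\one$ to justify $\mathrm{e}^{-J\theta}\mathrm{e}^{-J\theta}=\mathrm{e}^{-2J\theta}$. The one point to handle carefully is the order of composition, since $A^a$ and $J$ do not commute; but once the anticommutation is applied on the correct side the remaining computation is purely formal.
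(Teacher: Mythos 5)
Your proof is correct and is exactly the routine verification the paper relies on: the paper states this lemma without proof, taking for granted the substitution $A^c=H\one$, $A^a=A-H\one$ from Lemma \ref{remark_cmc} and the identity $\mathrm{e}^{-J\theta}A^a\mathrm{e}^{J\theta}=\mathrm{e}^{-2J\theta}A^a$ coming from the anticommutation $A^aJ=-JA^a$. Your handling of the order of composition when sliding $A^a$ past $\mathrm{e}^{J\theta}$ is the one point where care is needed, and you got it right.
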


We also need a family of vector fields, $T_{\theta}\in\mathfrak{X}(M)$, $\theta\in\R$. 
\begin{definition} We will say that the family of immersions $\chi_{\theta}:M\rightarrow P$ has 
{\em rotating structure vector field} if there exists a smooth map $\mathbb{R}\times M\rightarrow TM$, $(\theta,p)\mapsto T_{\theta}(p)$ satisfying:
\begin{enumerate}
\item For each $\theta\in\mathbb{R}$, the restriction $T_{\theta}\in\mathfrak{X}(M)$;
\item There exist two smooth functions $\lambda, \mu:\R\rightarrow\R$ such that $\lambda(0)=1$, $\mu(0)=0$, and $T_{\theta}=\lambda(\theta) T+\mu(\theta) JT$. 
\end{enumerate}
\end{definition}
In the following we use the notation $\lambda T+\mu JT=(\lambda\one +\mu J)T$, where $\one$ is the identity map on $TM$. We also need to construct the corresponding family of functions $f_{\theta}:M\rightarrow\R$, $\theta\in\R$, from a map $\mathbb{R}\times M\rightarrow \R$, $(\theta,p)\mapsto f_{\theta}(p)$ such that for each $\theta\in\R$, the restriction $f_{\theta}$ satisfies  the conditions  
\begin{equation}\label{ftheta}
1= \|T_{\theta}\|^2+f_{\theta}^2, \quad 
(\lambda^2+\mu^2)(1-f^2)=1-f_{\theta}^2, \quad 
\theta\in\R, \quad f_0=f, 
\end{equation}
in the homogeneous case and 
\begin{equation}\label{fthetah}
\varepsilon= \|T_{\theta}\|^2+\varepsilon_3f_{\theta}^2, \quad 
(\lambda^2+\mu^2)(\varepsilon-\varepsilon_3f^2)=\varepsilon-\varepsilon_3 f_{\theta}^2, \quad 
\theta\in\R, \quad f_0=f, 
\end{equation}
in the warped product case. Note that in either case, the map $\R\times M\rightarrow \R$, $(\theta,p)\mapsto f_{\theta}(p)$ is always continuous, and smooth whenever it is different from zero. However, we can assume without loss of generality that each $f_{\theta}$ is always smooth.
\begin{lemma} \label{detAtheta}
$\det A_{\theta}=F^2_1\det A+(F^2_2-F^2_1)H^2.$
\end{lemma}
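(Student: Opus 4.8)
The plan is to reduce the statement to a direct computation of a $2\times 2$ determinant, exploiting the algebra of the complex structure $J$ together with the elementary identity $\det(M+s\one)=\det M+s\,\mathrm{tr}\,M+s^2$, valid for any endomorphism $M$ of a plane and any scalar $s$. By the preceding lemma I would first rewrite the defining relation of a generalized associate family as $A_\theta=F_1\,\mathrm{e}^{-2J\theta}A^a+F_2H\one$, where $A^a=A-H\one$ is the anticommuting part of the shape operator. Since everything lives on the two-dimensional space $TM$, I then apply the identity above with $M=F_1\mathrm{e}^{-2J\theta}A^a$ and $s=F_2H$, so that $\det A_\theta=\det(F_1\mathrm{e}^{-2J\theta}A^a)+F_2H\,\mathrm{tr}(F_1\mathrm{e}^{-2J\theta}A^a)+(F_2H)^2$, and I handle the three resulting terms separately.

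For the determinant term, because $\mathrm{e}^{-2J\theta}=\cos(2\theta)\one-\sin(2\theta)J$ is a rotation of the tangent plane it has determinant $1$, so multiplicativity gives $\det(F_1\mathrm{e}^{-2J\theta}A^a)=F_1^2\det A^a$. For the last term there is nothing to do. The crucial point is the vanishing of the middle (trace) term: expanding, $\mathrm{tr}(\mathrm{e}^{-2J\theta}A^a)=\cos(2\theta)\,\mathrm{tr}(A^a)-\sin(2\theta)\,\mathrm{tr}(JA^a)$, and I claim both traces vanish. The first is zero since $A^a=A-H\one$ is traceless by the definition $2H=\mathrm{tr}\,A$. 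The second is zero because $J$ is skew-adjoint ($J^{*}=-J$) while $A^a$ is self-adjoint, whence $\mathrm{tr}(JA^a)=\mathrm{tr}\big((JA^a)^{*}\big)=-\mathrm{tr}(A^aJ)=-\mathrm{tr}(JA^a)$, forcing $\mathrm{tr}(JA^a)=0$.

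Finally I would express $\det A^a$ through $\det A$: once more $\det(A-H\one)=\det A-H\,\mathrm{tr}\,A+H^2$, and $\mathrm{tr}\,A=2H$ yields $\det A^a=\det A-H^2$. Assembling the pieces gives $\det A_\theta=F_1^2(\det A-H^2)+F_2^2H^2=F_1^2\det A+(F_2^2-F_1^2)H^2$, which is exactly the asserted formula.

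I do not expect a genuine obstacle here, since the entire argument is linear algebra in dimension two. The only step demanding a small observation is the vanishing of $\mathrm{tr}(\mathrm{e}^{-2J\theta}A^a)$: this is precisely where the self-adjointness of $A^a$ and the skew-adjointness of $J$ (equivalently, the anticommutation $A^aJ=-JA^a$) are used, and it is what makes the rotation $\mathrm{e}^{-2J\theta}$ preserve the determinant of $A^a$ while contributing no cross term with the scalar part $F_2H\one$.
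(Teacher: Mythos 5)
Your proof is correct, and it takes a route that differs in its key mechanism from the paper's. The paper exploits the anticommutation $A^aJ=-JA^a$ to write $F_1\mathrm{e}^{-2J\theta}A^a+F_2H\one=\mathrm{e}^{-J\theta}\bigl(F_1A^a+F_2H\one\bigr)\mathrm{e}^{J\theta}$, so that $A_\theta$ is conjugate (similar) to the $\theta$-independent operator $F_1A^a+F_2H\one$; invariance of the determinant under conjugation then kills the rotation in one stroke, and only the $\theta=0$ determinant remains to be expanded. You instead keep the rotation in place and neutralize it term by term: multiplicativity of $\det$ handles the quadratic term because $\det\mathrm{e}^{-2J\theta}=1$, and the cross term dies because both $\mathrm{tr}(A^a)$ and $\mathrm{tr}(JA^a)$ vanish --- the latter by the clean self-adjoint/skew-adjoint pairing argument, which is valid for \emph{any} self-adjoint operator, not just the anticommuting part. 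Both proofs then finish with the same elementary $2\times 2$ identity $\det(M+s\one)=\det M+s\,\mathrm{tr}\,M+s^2$. The paper's conjugation trick is slicker and immediately shows that every similarity invariant of $A_\theta$ (not just $\det$) is independent of $\theta$ once $F_1,F_2$ are fixed; your version is more pedestrian but makes explicit exactly which structural facts (rotation has unit determinant, $A^a$ and $JA^a$ are traceless) are responsible for the formula, and it does not need the observation $\mathrm{e}^{-J\theta}A^a=A^a\mathrm{e}^{J\theta}$ at all.
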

\begin{proof}
Since the determinant of matrices is invariant under rotations, we have 
\begin{align*}\det A_{\theta}&=\det\Big(F_1e^{-2J\theta}(A-H\one)+F_2H\one\Big) \\
&=\det \Big(e^{-J\theta}\Big(F_1(A-H\one)+F_2H\one\Big)e^{J\theta}\Big)\\
&=\det\big(F_1(A-H \one)+F_2 H \one\big).\end{align*}
But using the fact that  $H=\frac{1}{2}\mathrm{tr}(A)$ we get easily
\begin{align*}&\det (F_1(A-H \one)+F_2 H \one)=F_1^2\det A+2F_1(F_2-F_1)H^2+(F_2-F_1)^2H^2\\
&=F_1^2\det A+(F_2-F_1)(2F_1+F_2-F_1)H^2=F^2_1\det A+(F^2_2-F^2_1)H^2.
\end{align*}
\end{proof}
\begin{lemma}\label{htheta}
$H_{\theta}=F_2H$
\end{lemma}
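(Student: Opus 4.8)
The plan is to compute $H_\theta=\tfrac12\mathrm{tr}(A_\theta)$ directly from the formula established in the preceding lemma, namely
\[ A_{\theta} =  F_1\mathrm{e}^{-2J\theta}(A-H\one)+F_2H\one.\]
The cleanest route mirrors the argument of Lemma \ref{detAtheta}. First I would recall the observation made just before these lemmas, that $A^c=H\one$ and $A^a=A-H\one$; in particular $A^a$ is trace-free, since $\mathrm{tr}(A^a)=\mathrm{tr}(A)-H\,\mathrm{tr}(\one)=2H-2H=0$, using that $\dim M=2$ forces $\mathrm{tr}(\one)=2$ and $H=\tfrac12\mathrm{tr}(A)$.

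The key step is to rewrite $A_\theta$ as a conjugation. Exactly as in Lemma \ref{detAtheta}, because $A-H\one=A^a$ anticommutes with $J$, one has $\mathrm{e}^{-2J\theta}(A-H\one)=\mathrm{e}^{-J\theta}(A-H\one)\RR{\theta}$, and since $\one$ commutes with $\RR{\theta}$ the constant term is unaffected. Hence
\[ A_{\theta}=\mathrm{e}^{-J\theta}\big(F_1(A-H\one)+F_2H\one\big)\RR{\theta}.\]
Trace is invariant under conjugation by $\RR{\theta}$, so $\mathrm{tr}(A_\theta)=\mathrm{tr}\big(F_1(A-H\one)+F_2H\one\big)=F_1\,\mathrm{tr}(A^a)+F_2H\,\mathrm{tr}(\one)$.

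Finally I would substitute the two facts from the first step, $\mathrm{tr}(A^a)=0$ and $\mathrm{tr}(\one)=2$, to obtain $\mathrm{tr}(A_\theta)=2F_2H$, whence $H_\theta=\tfrac12\mathrm{tr}(A_\theta)=F_2H$, as claimed. I do not anticipate a genuine obstacle here: the statement is a short trace computation. The only point requiring any care is the vanishing of $\mathrm{tr}(A^a)$; should one prefer to avoid the conjugation trick, the same conclusion follows by expanding $\mathrm{e}^{-2J\theta}=\cos(2\theta)\one-\sin(2\theta)J$ and noting that both $\mathrm{tr}(A^a)=0$ and $\mathrm{tr}(JA^a)=0$, the latter because cyclicity of the trace together with $JA^a=-A^aJ$ gives $\mathrm{tr}(JA^a)=\mathrm{tr}(A^aJ)=-\mathrm{tr}(JA^a)$.
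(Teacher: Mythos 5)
Your proof is correct and takes essentially the same approach as the paper, whose one-line proof computes $\mathrm{tr}(A_\theta)=F_1\mathrm{tr}(A-H\one)+2F_2H=2F_2H$ directly from the decomposition. The only difference is that you explicitly justify the step the paper leaves implicit, namely $\mathrm{tr}\big(\mathrm{e}^{-2J\theta}(A-H\one)\big)=\mathrm{tr}(A-H\one)$, via the conjugation trick of Lemma \ref{detAtheta} (or equivalently via $\mathrm{tr}(JA^a)=0$), which is a worthwhile clarification but not a different argument.
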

\begin{proof}
$2H_{\theta}=\mathrm{tr}(A_{\theta})=\mathrm{tr}\Big( F_1\mathrm{e}^{-2J\theta}(A-H\one )+F_2H\one
\Big)=F_1\mathrm{tr}(A-H\one)+2F_2H=2F_2H$.
\end{proof}

Let $(E_1,E_2)$ be a parallel local orthonormal frame of $M$. Then, we recall that the divergence of an operator ${\mathcal{T}}$ is given by $\delta\mathcal{T}=\mathrm{tr}(\nabla\mathcal{T})$, or in other words, 
\[\produ{\mathrm{tr}(\nabla \mathcal{T}),X} =\produ{ \delta\mathcal{T},X}=\sum_i \produ{(\nabla_{E_i}\mathcal{T})E_i,X}.
\]
Bearing this in mind, we see 
\begin{eqnarray*} && (\langle\nabla_ {E_1}\mathcal{T}) E_2,E_1\rangle - \langle(\nabla_{E_2}\mathcal{T})E_1,E_1\rangle = \\
&&=\produ{\delta{\mathcal{T}},E_2}-\langle(\nabla_ {E_2}\mathcal{T}) E_2,E_2\rangle- \langle
(\nabla_{E_2}\mathcal{T})E_1,E_1\rangle\\
&&=\produ{\delta {\mathcal{T}},E_2}-E_2(\textrm{tr}(\mathcal{T})).
\end{eqnarray*} 
Similarly we get $(\langle\nabla_ {E_1}\mathcal{T}) E_2,E_2\rangle - \langle (\nabla_{E_2}\mathcal{T})
E_1,E_2\rangle=(\delta\mathcal{T})(E_1)-E_1(\textrm{tr}(\mathcal{T}))$. Therefore, we obtain
$\produ{(\nabla_{E_1}T)E_2-(\nabla_{E_2}T)E_1,X}
=\produ{\delta \mathcal{T},X}-X(\mathrm{tr}(\mathcal{T}))$, 
for any $X\in TM$. Without the vector field, we have 
\begin{equation} \label{deltanabla}
d^{\nabla}\mathcal{T} = \delta \mathcal{T} - \nabla\mathrm{tr}(\mathcal{T}).
\end{equation}
We can apply this formula to $A$ and $A_{\theta}$.\\

%We need to discuss three extreme cases: $T=\dt$, $T=0$ and $0\neq T\neq \dt$ everywhere. 

\section{Homogeneous Spaces}

We recall that a homogeneous space $\mathbb{E}$ satisfying $\tau=0$ reduces to a product $\mathbb{S}^2(r)\times\R$ or $\mathbb{H}^2(-r)\times\R$, for some $r>0$.

\begin{theorem}\label{homo}
Let $\chi:M\rightarrow\mathbb{E}$ be an immersion such that $M$ is connected. Then, $\chi$ admits a generalized associated family with rotating structure vector field if, and only if, $\tau=0$ and $\chi$ is one of the following:
\begin{enumerate}
\item \label{tg} a totally geodesic surface; 
\item \label{mini} a (not totally geodesic) minimal surface;
\item \label{tu} a (not totally geodesic) totally umbilical surface.
\end{enumerate}
\end{theorem}

We split the proof in three extreme cases,  namely open sets on which $T=0$, $0\neq T\neq \partial_t$, $T=\partial_t$. After the following subsections, we obtain $\tau=0$ and the surface will locally be either totally geodesic, minimal or totally umbilical. In the next few lines, we will show that all of these surfaces are analytical. Thus, they are mutually excluding. 

Firstly, any minimal surface in $\mathbb{S}^2(r)\times\R$ or $\mathbb{H}^2(-r)\times\R$ can be locally seen as a graph over an open subset of $\mathbb{S}^2$ or $\mathbb{H}^2$ of a function which is a solution to a well-known elliptic PDE with analytical coefficients. Thus, minimal surfaces in such spaces are analytical. 

Secondly, we recall that totally geodesic surfaces 
are open subsets of either (i) a slice $\mathbb{S}^2\times\{t_o\}$ or $\mathbb{H}^2\times\{t_o\}$, or (ii) vertical cylinders over a geodesic. 

From now until next Subsection \ref{casoTdt}, we will use \cite{ST}, so we suggest the reader to  check it for more details, if necessary. Thirdly, according to \cite{ST}, among the homogeneous spaces $\mathbb{E}$, only the products $\mathbb{S}^2\times\R$ and $\mathbb{H}^2\times\R$ admit totally umbilical surfaces, being open parts of either (i) slices $\mathbb{S}^2\times\{t_o\}$, $\mathbb{H}^2\times\{t_o\}$, or (ii) vertical cylinders over a geodesic in $\mathbb{S}^2$ or $\mathbb{H}^2$, or (iii) umbilical  complete surfaces, which  are invariant by 1-parameter subgroups of ambient isometries. Thus, they are constructed by rotating a profile curve, which makes the whole surface depend just on a certain function $\theta$. 

Next, in $\mathbb{S}^2\times\R$, we assume that we can smoothly glue a non totally geodesic, (rotationally invariant) totally umbilical surface and a minimal surface along a curve. Along such curve, the principal curvatures vanish. In \cite{ST}, page 678, we obtain that  function $\theta$ satisfies $\theta'(s_o)=0=\sin(\theta(s_o))=0$ for some $s_o$ real number. But then, $\cos(\theta(s_o))=\pm 1$. Following the computations on the same page, we see that the totally umbilical side becomes an open subset of a totally geodesic slice. This is a contradiction. Similar computations hold for surfaces in $\mathbb{H}^2\times\R$. 

Since totally geodesic slices can be seen as minimal surfaces in $\mathbb{S}^2\times\R$ and $\mathbb{H}^2\times\R$, and both are analytical, they are mutually excluding. 

On the other hand, the authors of \cite{ST} show that, unlike in real space forms, our totally umbilical surfaces are not CMC. In all cases, the mean curvature function of these surfaces are solutions to several non-trivial ODE, and there exists three families in both $\mathbb{S}^2\times\R$ and $\mathbb{H}^2\times\R$, parametrized on certain intervals. Now take two such surfaces, say $\chi_1$ and $\chi_2$. The respective mean curvatures $H_1$ and $H_2$ are not constant, and $H_1$ cannot be obtained by multiplying $H_2$ by a constant, unless $\chi_1$ and $\chi_2$ are linked by an isometry, and therefore, this constant has to be $1$. See \cite{ST} for details.  Then, taking a generalized associated family with a totally umbilical surface $\chi$, by Lemma \ref{generalizada}, all of them are also totally umbilical, i.~e., $A_{\theta}=F_2(\theta)H\mathbf{1}$.  By the previous argument, $F_2(\theta)=1$ for any $\theta\in\R$, and then $\chi_{\theta}=\Phi_{\theta}\circ\chi$ for some isometry $\Phi_{\theta}$ of either $\mathbb{S}^2\times\R$ or $\mathbb{H}^2\times\R$. Moreover, the functions $\lambda$ and $\mu$ reduce to $\lambda(\theta)=1$ and $\mu(\theta)=0$ for any $\theta\in\R$, since we are not rotating the vector field $T$.

\subsection{The case $T=\dt$} \label{casoTdt}

This condition is equivalent to $f=0$ everywhere.  All these surfaces are known as \textit{vertical cylinders}. The reason is that there exists a curve $\alpha$ on $\mathbb{M}^2(\kappa)$ such that $M=\pi^{-1}(\alpha)$, where $\pi:\mathbb{E}\rightarrow\mathbb{M}^2(\kappa)$ is the natural projection on the fiber. 
By equation \eqref{deltanabla} we get $\delta A=\nabla H$.

By equation \eqref{eqforX(f)}, we get that $A$ has the form $\left( \begin{array}{cc}
0 &-\tau\\
-\tau & 2H
\end{array} \right)$. Notice that $\det A=-\tau^2$. Since $f=0$, then $\nabla_XT=0$ by equation \eqref{eqforThomo}. Consequently
\[0=(\lambda\one+\mu J)\nabla_XT=[1-(\lambda^2+\mu^2)](A_{\theta}X-\tau JX),\]
and either $A_{\theta}=\tau J$, or $1=\lambda^2+\mu^2$. By \eqref{ftheta}, $1=\lambda^2+\mu^2$ is equivalent to $f_{\theta}=0$. If $A_{\theta}=\tau J$, since $A_{\theta}$ is symmetric and $J$ skew-symmetric, then $A_{\theta}=0$ and $\tau=0$. The three equations reduce to
\[ K_{\theta}=\kappa f_{\theta}^2, \quad 
0=f_{\theta}(\produ{T_{\theta},Y}X-\produ{T_{\theta},X}Y), \quad
X(f_{\theta})=0,
\] 
for any $X,Y\in TM$. But then, for each $\theta$, $f_{\theta}$ is a constant function. If for some $\theta$, $f_{\theta}\neq 0$, then $0=\produ{T_{\theta},Y}X-\produ{T_{\theta},X}Y$, which implies 
 $T_{\theta}=0$, and since $\dt=T_{\theta}+f_{\theta}N_{\theta}$, then $f_{\theta}=\pm 1$. Since the map $(\theta,p)\mapsto f_{\theta}(p)$ is continuous and $f_{0}=f=0$, we get a contradiction. Therefore, $f_{\theta}=0$ for any $\theta$. This means that each immersion can be recovered as the pre-image of a curve on the base, as pointed out at the beginning of this section. Moreover, $T_{\theta}=\dt$ for any $\theta$. But now, $A_{\theta} \equiv \left( \begin{array}{cc}
0 & -\tau \\-\tau & 2H_{\theta}
\end{array} \right)$, for any $\theta$. This means \begin{eqnarray*}
-\tau JT_{\theta} = -\tau JT &=&A_{\theta}T_{\theta}=A_{\theta}T
=(F_1\mathrm{e}^{-2J\theta}(A-H\one)+F_2H\one)T 
\\
&=&
H\big((F_2-F_1\cos(2\theta))T+F_1\sin(2\theta)JT\big),
\end{eqnarray*}
and from this, 
\[  0 = H(F_2(\theta)-F_1(\theta)\cos(2\theta)), \quad -\tau =HF_1(\theta)\sin(2\theta),
\]
for any $\theta$ and everywhere on $M$. If $H_p\neq 0$ for some $p\in M$, then it holds  $F_1(\theta)\sin(2\theta)=-\tau/H$. By taking two different values of $\theta$, we see $\tau=0$, and then $F_1=0$, which is a contradiction. Thus, we arrive to $H=0$, and then $\tau=0$. Similarly, by using $JT_{\theta}$, we obtain $H_{\theta}=0$ for any $\theta$.

\subsection{The case $0\neq T\neq \dt$ everywhere}

Note that we have $f\neq 0$ everywhere. Since the map $(\theta,p)\in \R\times M\mapsto f_{\theta}(p)$ is continuous and $f_0\neq 0$, there exist an interval $\tilde{I}\subset \R$ and an open subset $\mathcal{U}\subset M$ such that $f_{\theta}(p)\neq 0$ for any $(\theta,p)\in\tilde{I}\times\mathcal{U}$. In addition, since $F_1(0)=1$, we can also assume that $F_1(\theta)\neq 0$ for any $\theta\in\tilde{I}$. Then, we work on this subset $\tilde{I}\times\mathcal{U}$.

\begin{lemma}\label{cinco} If $f\neq 0$, then the structure equations are equivalent to
\begin{eqnarray}
\det A-\det A_{\theta}&=&(\kappa-4\tau^2)(1-(\lambda^2+\mu^2))(1-f^2) \label{Gausshomonew}\\
f (\delta A_{\theta}-2\nabla H_{\theta})&=&f_{\theta} (\lambda\one+\mu J)(\delta A-2\nabla H) \label{Codazzihomonew}\\
\hspace{-1cm}(\lambda\one+\mu J) \nabla_XT&=&f_{\theta}(F_1e^{-2J\theta}(A-H\one)X+F_2HX-\tau JX)\label{eqforThomonew}
\end{eqnarray}
\end{lemma}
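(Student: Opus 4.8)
The plan is to treat the statement as a translation of the four structure equations of Theorem \ref{fundTHhomo}, written for the deformed data $(A_\theta,T_\theta,f_\theta)$, into relations involving only the original data $(A,T,f)$ and the family functions $F_1,F_2,\lambda,\mu$. The crucial observation throughout is that every $\chi_\theta$, and $\chi$ itself, are isometric immersions of the \emph{same} Riemannian surface $(M,g)$, so the intrinsic Gauss curvature is common to all of them, i.e.\ $K_\theta=K$ for every $\theta$. I would therefore, for each of \eqref{Gausshomo}--\eqref{eqforX(f)} written for $\chi_\theta$, substitute the defining relations $T_\theta=(\lambda\one+\mu J)T$ and $A_\theta=F_1\mathrm{e}^{-2J\theta}(A-H\one)+F_2H\one$, together with $\mathrm{tr}\,A_\theta=2H_\theta=2F_2H$ from Lemma \ref{htheta}, the value of $\det A_\theta$ from Lemma \ref{detAtheta}, and the norm identities \eqref{ftheta}, and then compare with the corresponding equation written for $\chi$.

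For the Gauss equation I would subtract \eqref{Gausshomo} for $\chi$ from the same equation for $\chi_\theta$. Since $K_\theta=K$ the curvature terms cancel, and using $\|T_\theta\|^2=(\lambda^2+\mu^2)\|T\|^2$ (because $J$ is an isometry and $JT\perp T$) together with $\|T\|^2=1-f^2$, the difference collapses exactly to \eqref{Gausshomonew}. The equation \eqref{eqforThomonew} for the structure field is essentially immediate: since $\lambda$ and $\mu$ depend only on $\theta$ and $J$ is parallel, one has $\nabla_XT_\theta=(\lambda\one+\mu J)\nabla_XT$, and substituting the expression for $A_\theta$ into \eqref{eqforThomo} written for $\chi_\theta$ yields \eqref{eqforThomonew} directly.

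The delicate step is the Codazzi equation, and it is where I expect the main bookkeeping obstacle. Here I would use the two-dimensional identity \eqref{deltanabla} to convert the $TM$-valued two-form $(\nabla_XA_\theta)Y-(\nabla_YA_\theta)X$ into the vector field $\delta A_\theta-2\nabla H_\theta$, the trace term being $\mathrm{tr}\,A_\theta=2H_\theta$; evaluating both sides of \eqref{Codazzihomo} on an oriented orthonormal frame then reduces the Codazzi equation for $\chi_\theta$ to the vector identity $\delta A_\theta-2\nabla H_\theta=(\kappa-4\tau^2)f_\theta T_\theta$, and likewise $\delta A-2\nabla H=(\kappa-4\tau^2)fT$ for $\chi$ (the rotation $J$ and the sign coming from \eqref{deltanabla} appear on both sides and cancel). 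Applying $(\lambda\one+\mu J)$ to the second identity and using $(\lambda\one+\mu J)T=T_\theta$ shows that both $f(\delta A_\theta-2\nabla H_\theta)$ and $f_\theta(\lambda\one+\mu J)(\delta A-2\nabla H)$ equal $(\kappa-4\tau^2)ff_\theta T_\theta$, which is precisely \eqref{Codazzihomonew}; for the converse one divides by $f\neq0$ and reinserts the Codazzi equation of $\chi$.

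Finally, the fourth structure equation \eqref{eqforX(f)} for $f_\theta$ need not be listed because it is redundant: differentiating the constraint $\|T_\theta\|^2+f_\theta^2=1$ from \eqref{ftheta} gives $f_\theta X(f_\theta)=-\produ{\nabla_XT_\theta,T_\theta}$, and substituting \eqref{eqforThomonew} and dividing by $f_\theta\neq0$ recovers exactly \eqref{eqforX(f)} for $\chi_\theta$. Since we work on the open set $\tilde I\times\mathcal U$ where $f_\theta\neq0$ and $F_1\neq0$, all the divisions involved are legitimate, so the full system of structure equations for $\chi_\theta$ is equivalent to the three listed equations \eqref{Gausshomonew}--\eqref{eqforThomonew}.
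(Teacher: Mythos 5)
Your proposal is correct and follows essentially the same route as the paper: Gauss and the $T$-equation by direct substitution using \eqref{ftheta}, and Codazzi via the identity \eqref{deltanabla} together with the key observation that the right-hand side built from $T_{\theta}$ equals $(\lambda\one+\mu J)$ applied to the one built from $T$ (your parenthetical remark correctly absorbs the $-J$ factor and sign that both sides acquire when evaluating on an oriented frame). Your closing argument that \eqref{eqforX(f)} for $\chi_{\theta}$ is redundant where $f_{\theta}\neq 0$ is a point the paper leaves implicit, and it is needed for the stated equivalence, so it is a welcome addition.
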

\begin{proof} Formulae \eqref{Gausshomonew} and \eqref{eqforThomonew} are a direct consequence of \eqref{ftheta} and Theorem \ref{fundTHhomo}. Next, we write $Y=k X+m JX$, for some smooth functions $k$ and $m$ defined on open subsets. An easy computation shows that the right hand side is given by
\begin{eqnarray*}
&&\produ{Y,T_{\theta}}X -\produ{X,T_{\theta}}Y=
k\lambda[\langle JX,T\rangle X-\langle X,T\rangle JX]\\
&&\quad +m\,\mu[\langle X, T\rangle X+\langle JX,
T\rangle JX]\\
&&\quad =m(\lambda \one+\mu J)[\langle JX,T\rangle X-\langle X,T\rangle JX]\\
&&\quad =(\lambda \one+\mu J)\Big(\produ{Y,T}X -\produ{X,T}Y\Big).
\end{eqnarray*}
In this way, by \eqref{deltanabla}, we have
\begin{align*}
& f (\delta A_{\theta}-2\nabla H_{\theta})=f (d^{\nabla}A_{\theta}) =
f\,f_{\theta} (\kappa-4\tau^2)( \produ{E_2,T_{\theta}}E_1-\produ{E_1,T_{\theta}}E_2 )\\
& = f\,f_{\theta} (\kappa-4\tau^2)(\lambda\one+\mu J)( \produ{E_2,T}E_1-\produ{E_1,T}E_2 ) \\
& =
f_{\theta}(\lambda\one+\mu J)(d^{\nabla}A)(X)=f_{\theta}(\lambda\one+\mu J)(\delta A-2\nabla H).
\end{align*}
\end{proof}

\begin{lemma}\label{structuremodhomo2}
If $f\neq 0$, the three equations of Lemma \ref{cinco} are equivalent to 
\begin{align}
&(1-F^2_1)(K-\tau^2) - 
(F_2^2-F_1^2)H^2\nonumber \\ & = (\kappa-4\tau^2)\Big(1-(\lambda^2+\mu^2)+(\lambda^2+\mu^2-F_1)f^2\Big),\label{Gauss3}\\
&F_1e^{-2J\theta}\delta A^a-F_2\nabla H=(\lambda\one+\mu J)\frac{f_{\theta}}{f}(\delta A^a-\nabla H),\label{Codazzi3}\\
&\Big(f(\lambda\one+\mu J)-f_{\theta}F_1e^{-2J\theta}\Big)AX \nonumber \\ & =f_{\theta}(F_2-F_1e^{-2J\theta})HX+(f(\lambda\one+\mu J)-f_{\theta})\tau JX \label{T3}.
\end{align}
\end{lemma}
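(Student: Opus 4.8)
The plan is to treat the three equations of Lemma \ref{cinco} one at a time and show that each is equivalent, after a purely algebraic manipulation built on the identities already in hand, to the corresponding target equation. Throughout I would use the orthogonal splitting $A = A^a + H\one$ with $A^c = H\one$, the expression $A_\theta = F_1 e^{-2J\theta}A^a + F_2 H\one$ from the preceding lemma, Lemmas \ref{detAtheta} and \ref{htheta}, and crucially the fact that $J$ is parallel on the Riemann surface, so that $e^{-2J\theta}$ is a parallel $(1,1)$-tensor while $F_1,F_2,\theta$ are constant along $M$. Since we work on a region where $f\neq 0$, every step that divides by $f$ is reversible, so these manipulations give equivalences rather than one-way implications.

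First I would handle the Gauss equation. Substituting Lemma \ref{detAtheta} gives $\det A - \det A_\theta = (1-F_1^2)\det A - (F_2^2-F_1^2)H^2$, and eliminating $\det A$ through the Gauss equation \eqref{Gausshomo} together with $1 - \|T\|^2 = f^2$ replaces $\det A$ by $K - \tau^2 - (\kappa-4\tau^2)f^2$. Inserting these into \eqref{Gausshomonew} and collecting the terms proportional to $(\kappa-4\tau^2)$ and to $f^2$ yields the curvature relation \eqref{Gauss3}; this step is entirely computational.

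Next I would treat the Codazzi equation, which I expect to be the delicate one. The key point is that, because $J$ is parallel and $F_1,F_2,\theta$ are constant on $M$, the divergence passes through the constant coefficients: one gets $\delta A_\theta = F_1 e^{-2J\theta}\delta A^a + F_2\nabla H$ and $\delta A = \delta A^a + \nabla H$, while $\nabla H_\theta = F_2\nabla H$ by Lemma \ref{htheta}. Hence $\delta A_\theta - 2\nabla H_\theta = F_1 e^{-2J\theta}\delta A^a - F_2\nabla H$ and $\delta A - 2\nabla H = \delta A^a - \nabla H$. Substituting these into \eqref{Codazzihomonew} and dividing by $f$ produces \eqref{Codazzi3} verbatim, the scalar factor $f_\theta/f$ commuting past $\lambda\one+\mu J$. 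The main care here is the bookkeeping for the divergence of the composite $e^{-2J\theta}A^a$, where the parallelism of $J$ is essential: this is the step where an unjustified move would break the argument.

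Finally, for the structure equation I would substitute the immersion identity \eqref{eqforThomo}, $\nabla_X T = f(AX - \tau JX)$, into the left-hand side of \eqref{eqforThomonew} and expand $(A-H\one)X = AX - HX$ on the right-hand side. Collecting all terms containing $AX$ on the left produces the factor $f(\lambda\one+\mu J) - f_\theta F_1 e^{-2J\theta}$, while the remaining terms regroup into $f_\theta(F_2 - F_1 e^{-2J\theta})HX$ and $(f(\lambda\one+\mu J) - f_\theta)\tau JX$, which is exactly \eqref{T3}. Reversing each of the three manipulations, all of which only invoke established identities and division by the nonvanishing $f$, then gives the claimed equivalence.
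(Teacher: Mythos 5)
Your proposal is correct and follows essentially the same route as the paper: insert Lemma \ref{detAtheta} and the original Gauss equation (with $1-\|T\|^2=f^2$) to obtain \eqref{Gauss3}, pass the divergence through the parallel tensor $e^{-2J\theta}$ and use Lemma \ref{htheta} to obtain \eqref{Codazzi3}, and eliminate between \eqref{eqforThomo} and \eqref{eqforThomonew} to obtain \eqref{T3}. The only differences are cosmetic: you spell out the divergence bookkeeping that the paper compresses into ``we immediately get the second equation,'' and you substitute $\nabla_XT=f(AX-\tau JX)$ into the left-hand side of \eqref{eqforThomonew} where the paper substitutes $AX=\frac{1}{f}\nabla_XT+\tau JX$ into the right-hand side, which is the same manipulation run in the opposite direction.
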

\begin{proof}
By inserting Lemma \ref{detAtheta} in \eqref{Gausshomonew}, we get 
\[(1-F^2_1)\det A-(F^2_2-F^2_1)H^2=(\kappa-4\tau^2)(1-(\lambda^2+\mu^2))(1-f^2).\]
Using again the original Codazzi equation 
$K=\det A+\tau^2+(\kappa-4\tau^2)(f^2) $, we have then
\begin{eqnarray*}
&& (1-F^2_1)(K-\tau^2) -(F_2^2-F^2_1)H^2 \\
&& = (\kappa-4\tau^2)\Big((1-(\lambda^2+\mu^2))(1-f^2)+(1-F_1)f^2\Big)\\
&&=(\kappa-4\tau^2)\Big(1-(\lambda^2+\mu^2)+(\lambda^2+\mu^2-F_1)f^2\Big),
\end{eqnarray*}From \eqref{Codazzihomonew} we immediately get the second equation. Finally, by using equation \eqref{eqforThomonew}, 
\begin{eqnarray*}
&&\nabla_XT_{\theta}=f_{\theta}(F_1e^{-2J\theta}(A-H\one)X+F_2HX-\tau JX)\\
&&=f_{\theta}F_1e^{-2J\theta}AX+f_{\theta}H(F_2\one-F_1e^{-2J\theta})X-f_{\theta}\tau JX\\
&&=f_{\theta}F_1e^{-2J\theta}(\frac{1}{f}\nabla_XT+\tau JX)+f_{\theta}H(F_2\one-F_1e^{-2J\theta})X-f_{\theta}\tau JX\\
&&=\frac{f_{\theta}}{f}F_1e^{-2J\theta}\nabla_XT+f_{\theta}H(F_2\one-F_1e^{-2J\theta})X+f_{\theta}\tau(F_1e^{-2J\theta}-\one) JX
\end{eqnarray*}
and we have
\begin{gather*}
\Big((\lambda\one+\mu J)-\frac{f_{\theta}}{f}F_1e^{-2J\theta}\Big)\nabla_XT \\
=f_{\theta}H(F_2\one-F_1e^{-2J\theta})X+f_{\theta}\tau(F_1e^{-2J\theta}-\one) JX
\end{gather*}
or equivalently
\begin{gather}
\Big(f(\lambda\one+\mu J)-f_{\theta}F_1e^{-2J\theta}\Big)AX\nonumber \\ =f_{\theta}H(F_2\one-F_1e^{-2J\theta})X+\tau (f(\lambda\one+\mu J)-f_{\theta}\one)JX.  \label{eqforThomomodified2}
\end{gather}
\end{proof}
Now plugging in $T$ and $JT$ for $X$ in \eqref{T3}, we get
\hspace{-1cm}\begin{align*}
&\begin{cases}
\Big(f(\lambda\one+\mu J)-f_{\theta}F_1e^{-2J\theta}\Big)AT=f_{\theta}H(F_2\one-F_1e^{-2J\theta})T \\
\hspace{6cm}+\tau(f(\lambda\one+\mu J)-f_{\theta}) JT, \\
\Big(f(\lambda\one+\mu J)-f_{\theta}F_1e^{-2J\theta}\Big)AJT=f_{\theta}H(F_2\one-F_1e^{-2J\theta})JT\\
\hspace{6cm}		-\tau(f(\lambda\one+\mu J)-f_{\theta}) T.
\end{cases}\\
&
\begin{cases}
\Big(f(\lambda\one+\mu J)-f_{\theta}F_1e^{-2J\theta}\Big)AT=f_{\theta}H(F_2\one-F_1e^{-2J\theta})T \\ \hspace{6cm}+\tau(f(\lambda\one+\mu J)-f_{\theta}) JT,\\
\Big(f(\lambda\one+\mu J)-f_{\theta}F_1e^{-2J\theta}\Big)JAJT=-f_{\theta}H(F_2\one-F_1e^{-2J\theta})T\\
\hspace{6cm} -\tau(f(\lambda\one+\mu J)-f_{\theta}) JT.
\end{cases}
\end{align*}
Hence, by adding the two equations, we obtain  
\begin{equation}\label{operators}
\Big(f(\lambda\one+\mu J)-f_{\theta}F_1e^{-2J\theta}\Big)(AT+JAJT)=0.
\end{equation}
Note that this equation holds for any $\theta$.

Let us put $V=AT+JAJT$ and define the operator $B=f(\lambda\one+\mu J)-f_{\theta}F_1e^{-2J\theta}$.

\begin{lemma} If $V=0$ on an open subset $\mathcal{V}$ of $M$, then $\mathcal{V}$ is totally umbilical. 
\end{lemma}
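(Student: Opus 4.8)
The plan is to recognize the combination $V=AT+JAJT$ as (twice) the anticommuting part $A^a$ applied to $T$, and then to exploit the fact that a trace-free self-adjoint operator in dimension two which annihilates a nonzero vector must vanish identically.

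First I would simplify $V$ using Lemma~\ref{remark_cmc}. Setting $X=T$ in the identity $JAX+AJX=2HJX$ gives $AJT=2HJT-JAT$; applying $J$ and using $J^2=-\one$ then yields $JAJT=AT-2HT$. Hence
\[
V=AT+JAJT=2(A-H\one)T=2A^aT,
\]
where I have used the identification $A^a=A-H\one$ recorded at the start of this section. Thus the hypothesis $V=0$ on $\mathcal{V}$ is exactly equivalent to $A^aT=0$ there.

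Next I would use that $A^a$ anticommutes with $J$, i.e. $A^aJ=-JA^a$, which is precisely the defining property of the anticommuting part. Applying this to $T$ gives $A^a(JT)=-J(A^aT)=0$, so $JT$ also lies in the kernel of $A^a$. Since we are in the regime $0\neq T\neq\dt$, the field $T$ is nowhere zero, and therefore $\{T,JT\}$ is an orthogonal basis of $T_pM$ at every point $p\in\mathcal{V}$. Consequently $A^a$ vanishes on a basis, so $A^a\equiv 0$; that is, $A=H\one$ on $\mathcal{V}$, which is exactly the assertion that $\mathcal{V}$ is totally umbilical.

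The computation is short, so there is no serious analytic obstacle here; the only genuine content is the algebraic observation that $V$ is nothing but $2A^aT$, together with the use of $T\neq 0$. As an alternative to the basis argument in the final step one can argue spectrally: $A^a$ is self-adjoint with $\mathrm{tr}(A^a)=0$, hence its eigenvalues are $\pm\rho$, and a nonzero kernel forces $\rho=0$, so $A^a=0$. I would prefer the $J$-anticommuting version, since it avoids eigenvalue bookkeeping and uses only the structure already set up in the paper.
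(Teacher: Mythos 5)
Your proof is correct and follows essentially the same route as the paper: both arguments rest on Lemma~\ref{remark_cmc} together with the nonvanishing of $T$, the paper solving the pair of equations $AT+JAJT=0$ and $JAT+AJT=2HJT$ directly to get $AT=HT$, $AJT=HJT$, while you equivalently repackage this as $V=2A^aT$ and invoke the anticommutation $A^aJ=-JA^a$. The identification of $V$ with $2A^aT$ is a clean way to phrase the same algebra, and your use of $T\neq 0$ (valid here, since the lemma sits inside the section assuming $0\neq T\neq\dt$) matches the paper's implicit use of the basis $\{T,JT\}$.
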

\begin{proof} According to Lemma \ref{remark_cmc}, we know $0=AT+JAJT$ and $JAT+AJT=2HJT$. From here, a simple computation shows $AT=HT$ and $AJT=HJT$. 
\end{proof}
In \cite{Veken}, the classification of such surfaces is obtained. As a result, among 3-dim homogeneous manifolds with 4-dim isometry group, only the products $\mathbb{S}^2\times\mathbb{R}$ and $\mathbb{H}^2\times\mathbb{R}$ admit totally umbilical surfaces. In that paper, the author obtained a full classification, as well as local coordinates of all such surfaces. As a fast description, either they are  totally geodesic or invariant by 1-dim isometry subgroups which also leave invariant the slices of $\mathbb{S}^2\times\mathbb{R}$ and $\mathbb{H}^2\times\mathbb{R}$.

Next, we assume that $V\neq 0$ on $\mathcal{U}$. In fact, $\mathrm{Span}\{V\}\subset \ker B$. Then, we know 
$f(\lambda\one+\mu  J)V=f_{\theta}F_1e^{-2J\theta}V,$ 
which is equivalent to $f\lambda V+f\mu JV =f_{\theta} F_1\cos(2\theta) V -f_{\theta}F_1\sin(2\theta) JV$. 
Since $V$ and $JV$ are linearly independent, we have $f\lambda-f_{\theta}F_1\cos(2\theta) = 0$ and also $f\mu +f_{\theta}F_1\sin(2\theta)=0.$ Since we are assuming $f\neq 0$, we get the following expressions for $\lambda$ and $\mu$: 
\begin{equation}\label{lambdamu}\lambda = \frac{f_{\theta}}{f}F_1\cos(2\theta), \quad \mu=\frac{-f_{\theta}}{f}F_1\sin(2\theta).
\end{equation}
However, by inserting these formulae in $B$, we obtain $B=f(\lambda\one+\mu J)-f_{\theta}F_1e^{-2J\theta} =f\lambda\one+f\mu J-f_{\theta}F_1\cos(2\theta)\one+f_{\theta}F_1\sin(2\theta)J=0.$ 
Equation \eqref{eqforThomomodified2} becomes 
$0=BAX =f_{\theta}(F_2-F_1e^{-2J\theta})HX+(f_{\theta}F_1e^{-2J\theta}-f_{\theta}\one)\tau JX$, 
and since $f_{\theta}$ is not $0$, we get $(F_2-F_1e^{-2J\theta})HX+(F_1e^{-2J\theta}-\one)\tau JX=0$, 
and therefore
\begin{equation}\label{relationHandtau}
\begin{array}{c}
(F_2-F_1\cos(2\theta))H+F_1\sin(2\theta)\tau=0,\\
F_1\sin(2\theta)H+(F_1\cos(2\theta)-1)\tau=0.
\end{array}
\end{equation}
These two equations hold for any $\theta\in\tilde{I}$. By taking two different values for $\theta$, but close enough, we obtain that $H=\tau = 0$. 
 Coming back to \eqref{lambdamu}, we see that for any $X\in TM$,
\[  X\Big(\frac{f_{\theta}}{f}\Big) F_1\cos(2\theta) = 0 =  X\Big(\frac{f_{\theta}}{f}\Big) F_1\sin(2\theta).\]
Since $F_1\neq 0$, there exists a function $b(\theta)$ defined for $\theta\in\tilde{I}$ such that   $f_{\theta}=bf$. This means that $\lambda= bF_1\cos(2\theta)$ and $\mu=-bF_1\sin(2\theta)$, which leads to $T_{\theta}=bF_1e^{-2J\theta}T$. But now, $b^2f^2=f_{\theta}^2=1-\|T_{\theta}\|^2=1- b^2F_1^2\|T\|^2 = 1-b^2F_1^2(1-f^2)=1-b^2F_1^2+b^2F_1^2f^2$. This means that $f$ is constant on $\mathcal{U}$, and so is $f_{\theta}$ for each $\theta$, or $b=F_1=1$.  Again, in the first case, we see by \eqref{eqforX(f)} that $A_{\theta}T_{\theta}=0$. Since $M$ is a surface, we have $A_{\theta}JT_{\theta} = 2H_{\theta}JT_{\theta}=2HF_2JT_{\theta}=0$. In particular, $A=0$ and $M$ is totally geodesic. In the second case, the  associated family is the minimal family discussed by Daniel in \cite{D2}. 
%It is not a problem to include the totally geodesic surfaces among the totally umbilical ones. 

\subsection{The case $T=0$ everywhere}

Now, $f^2=1$, so by \eqref{eqforThomo}, we see $0=AX-\tau JX$ for any $X\in TM$. Since $A$ is symmetric and $J$ is skew-symmetric, then $A=0$ and $\tau=0$. This means that we are in the product case and   $M$ is an open subset of either $\mathbb{S}^2$ or $\mathbb{H}^2$ embedded in the ambient space as a totally geodesic slice. 

\section{Warped Product Spaces}

\begin{theorem}Let  $\chi:M^2\rightarrow (P^{3}=I\times \mathbb{M}_k^2(c),\produ{,}_1=\ep dt^2+a^2g_o)$, be an isometric immersion, where $\ep=\pm 1$. Assume that $P^3$ does not contain any open subset with constant sectional curvature. Then, $\chi$ admits a generalized  associated family with rotating structure vector field if, and only if, $M$ admits an open dense $\Omega\subset M$ such that $\chi$ restricted to each connected component of $\Omega$ is one of the following cases: 
\begin{enumerate}
\item Function $a$ is a constant function, and the surface is a vertical cylinder over a geodesic in $\mathbb{M}^2_k(c)$;
\item function $a$ is a constant function,  and the surface is minimal;
\item the surface is totally umbilical.
\end{enumerate}
\end{theorem}

Note that slices $\{t_o\}\times \mathbb{M}^2_k(c)$ are totally umbilical in $P^3$. Also, totally geodesic submanifolds can be regarded as either minimal or totally umbilical. 

We split the proof in three extreme cases, namely $T=0$, $0\neq T\neq \partial_t$, $T=\partial_t$.  After that, 
there will be a dense open $\Omega\subset M$ such that each connected component of $\Omega$ will be one of the cases in the list of the Theorem. When $a$ is a constant function, the manifold $P^3$ becomes analytic, and so its minimal and totally geodesic surfaces, and vertical cylinders over geodesics are also analytic. However, the situation is not so satisfactory when the surface is totally umbilical, as pointed out in \cite{SV}.

\subsection{The case $T=\dt$}
 
By equation \eqref{X(f)wp},  we get that $\langle AT,X\rangle=0$ for any $X$ tangent to $M$, thus $A$ has to have the form $\left( \begin{array}{cc}
0 & 0 \\
0 & 2H
\end{array} \right)$. Notice then that $A^a=\left( \begin{array}{cc}
H & 0 \\
0 & -H
\end{array} \right)$ and $\det A=0$. 

Firstly, we assume that $a\rq{}\neq 0$. 
From equation \eqref{Tcondwp} we get 
\[
(\lambda\one+\mu J)\nabla_XT=(\lambda\one+\mu J)\frac{a'}{a}(X-\varepsilon\produ{X,T}T)=\frac{a'}{a}(X-\varepsilon\produ{X,T_{\theta}}T_{\theta})).\]
By inserting $X=T$, we have $(\lambda\one+\mu J)\frac{a'}{a}(T-\varepsilon\produ{T,T}T)=0$, and consequently since $a\rq{}\neq 0$, $0=T-\ep\produ{T,T_{\theta}}T_{\theta}=(1-\lambda^2)T-\lambda\mu JT$, which readily implies $\lambda=1$, $\mu=0$ and $T=T_{\theta}$ for any $\theta$. From here, $f_{\theta}=0$ for any $\theta$, so that we repeat the computations to get $A_{\theta}T_{\theta}=0$. By using the expression of $A_{\theta}$, we obtain $0=A_{\theta}T_{\theta}=-F_1H(\cos(2\theta)-\sin(2\theta)JT)+F_2HT$, which means $F_1H\cos(2\theta)=0$. Clearly, $H=0$ and by Lemma \ref{htheta}, then $H_{\theta}=0$ for any $\theta$. In other words, $A_{\theta}=0$ for any $\theta$. 

Secondly, we assume there is a connected open subset $U$ of $M$ such that $a'\circ\pi_I(p)=0$ for any $p\in U$. By shrinking $U$ if necessary, we have that $\chi(U)\subset\{t_o\}\times \mathbb{M}_k^2(c)$, that is to say, $U$ is mapped onto a slice. In such case, on $U$, $T=\dt$ is normal to the surface, which is a contradiction. 

Thirdly, we can assume that $a'=0$ (on an open interval). From the structure equations we obtain $\nabla_{X}T_{\theta} = f_{\theta}A_{\theta} X,$ for any $\theta$ and any $X\in TM$. In particular, for $\theta=0$, we obtain $\nabla_XT=0=\nabla_XJT$. This means $f_{\theta}A_{\theta} X = \nabla_{X}T_{\theta} = \lambda \nabla_{X}T+\mu \nabla_{X}T =0$. Next, $f_{\theta}X(f_{\theta}) = -f_{\theta}\produ{A_{\theta}X,T_{\theta}}=0$, which implies $X(f_{\theta}^2)=0$. This shows that $f_{\theta}$ is a constant function. As in the homogeneous case, by the continuity of $(\theta,p)\mapsto f_{\theta}(p)$, we obtain that $f_{\theta}=0$ for any $\theta$. Next, from \eqref{X(f)wp}, we see $A_{\theta}T_{\theta}=0$. We repeat the computations as in the case $a\rq{}\neq 0$ to obtain $A_{\theta}=0$ for any $\theta$. 

\subsection{The case $0\neq T\neq \dt$ everywhere}

With computations similar to the case of homogeneous spaces we obtain easily the analog of Lemma \ref{cinco} in the case of warped products. 
\begin{lemma}\label{structureeqmod1wp} If $f\neq 0$, then the structure equations are equivalent to
\begin{eqnarray}
\det A-\det A_{\theta}&=&\varepsilon_3\left(\frac{a''}{a} -\frac{a'^2}{a^2}+\frac{\ep\,c}{a^2}\right)(1-(\lambda^2+\mu^2))(\varepsilon-\varepsilon_3f^2) \label{Gausshomonew2}\\
f (\delta A_{\theta}-2\nabla H_{\theta})&=&f_{\theta} (\lambda\one+J\mu)(\delta A-2\nabla H) \label{Codazziwpnew}\\
\hspace{-1cm}(\lambda\one+\mu
J)\nabla_XT&=&f_{\theta}(F_1e^{-2J\theta}(A-H\one)X+F_2HX)\nonumber \\
&&+\frac{a'}{a}(X-\ep\langle X,T_{\theta}\rangle T_{\theta}).\label{eqforTwpnew}
\end{eqnarray}
\end{lemma}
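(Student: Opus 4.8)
The plan is to mirror the proof of Lemma \ref{cinco} line by line, replacing the homogeneous structure equations by the warped-product ones \eqref{Gausswp}--\eqref{X(f)wp}. Concretely, I apply Theorem \ref{fundtheoremwp} to the family member $\chi_{\theta}$, whose data are $A_{\theta}=F_1\mathrm{e}^{-2J\theta}(A-H\one)+F_2H\one$, $T_{\theta}=(\lambda\one+\mu J)T$ and $f_{\theta}$, and then rewrite each of its four structure equations as a relation between the $\theta$-data and the base data $(A,T,f)$, which already satisfy \eqref{Gausswp}--\eqref{X(f)wp}. Three preliminary facts are used throughout: $\|T_{\theta}\|^2=(\lambda^2+\mu^2)\|T\|^2$ together with the norm relation \eqref{fthetah}; the fact that $J$ is parallel and that $\lambda,\mu$ depend only on $\theta$, so that $\nabla_XT_{\theta}=(\lambda\one+\mu J)\nabla_XT$; and the divergence identity \eqref{deltanabla}, applied to both $A$ and $A_{\theta}$ (the latter with $\mathrm{tr}(A_{\theta})=2H_{\theta}=2F_2H$ by Lemma \ref{htheta}).

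The Gauss and $T$-equations follow directly. Subtracting the two instances of \eqref{Gausswp} for $\chi$ and $\chi_{\theta}$, the terms of \eqref{Gausswp} not multiplying the squared norm cancel, and the coefficient $\bigl(\tfrac{a''}{a}-\tfrac{a'^2}{a^2}+\tfrac{\ep\ep_0}{a^2}\bigr)$ multiplies $\|T\|^2-\|T_{\theta}\|^2=(1-(\lambda^2+\mu^2))\|T\|^2=(1-(\lambda^2+\mu^2))(\ep-\ep_3f^2)$ by \eqref{fthetah}; multiplying through by $\ep_3$ then yields \eqref{Gausshomonew2}. For the $T$-equation I write \eqref{Tcondwp} for $\chi_{\theta}$, substitute the expression for $A_{\theta}$, and use $\nabla_XT_{\theta}=(\lambda\one+\mu J)\nabla_XT$, leaving the warping term $\tfrac{a'}{a}(X-\ep\langle X,T_{\theta}\rangle T_{\theta})$ untouched; this is precisely \eqref{eqforTwpnew}.

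The only genuine computation is Codazzi, and it is handled exactly as in Lemma \ref{cinco}: the antisymmetric combination satisfies $\langle Y,T_{\theta}\rangle X-\langle X,T_{\theta}\rangle Y=(\lambda\one+\mu J)\bigl(\langle Y,T\rangle X-\langle X,T\rangle Y\bigr)$, an identity that is purely algebraic in $J,\lambda,\mu$ and hence ambient-independent. Writing \eqref{Codazziwp} for $\chi_{\theta}$ via \eqref{deltanabla}, inserting this identity, and then substituting \eqref{Codazziwp} for $\chi$ to cancel the common factor $\bigl(\tfrac{a''}{a}-\tfrac{a'^2}{a^2}+\tfrac{\ep\ep_0}{a^2}\bigr)$, I obtain $f\,d^{\nabla}A_{\theta}=f_{\theta}(\lambda\one+\mu J)\,d^{\nabla}A$, which is \eqref{Codazziwpnew}. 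I expect the one delicate point to be the bookkeeping of the signs $\ep,\ep_0,\ep_3$ and, in particular, checking that the fourth structure equation \eqref{X(f)wp} for $\chi_{\theta}$ is not independent of the others: pairing \eqref{Tcondwp} with $T_{\theta}$ and using $\langle\nabla_XT_{\theta},T_{\theta}\rangle=\tfrac12 X(\|T_{\theta}\|^2)$ together with the constraint $\|T_{\theta}\|^2+\ep_3f_{\theta}^2=\ep$ (so that $1-\ep\|T_{\theta}\|^2=\ep\ep_3f_{\theta}^2$) recovers \eqref{X(f)wp} on the locus $f_{\theta}\neq 0$. This redundancy, valid since we work on the subset where $f$ and $f_{\theta}$ are nonzero, is exactly why only three equations appear in the statement.
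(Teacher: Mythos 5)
Your proposal is correct and takes essentially the same route as the paper: the paper gives no separate proof of this lemma, saying only that it follows by ``computations similar to the case of homogeneous spaces,'' i.e.\ by mirroring the proof of Lemma~\ref{cinco} with the warped-product structure equations of Theorem~\ref{fundtheoremwp}, which is exactly what you carry out (same subtraction for Gauss, same algebraic identity $\langle Y,T_{\theta}\rangle X-\langle X,T_{\theta}\rangle Y=(\lambda\one+\mu J)(\langle Y,T\rangle X-\langle X,T\rangle Y)$ for Codazzi, same direct substitution for the $T$-equation). Your closing verification that \eqref{X(f)wp} for $\chi_{\theta}$ is redundant where $f_{\theta}\neq 0$ makes explicit a step the paper leaves unstated (there, as in Lemma~\ref{cinco}, it is the hidden reason for the hypothesis $f\neq 0$); note only that, with the equations exactly as transcribed in Theorem~\ref{fundtheoremwp}, this differentiation argument returns $X(f_{\theta})=-\ep_3\langle A_{\theta}X,T_{\theta}\rangle-\ep\frac{a'}{a}f_{\theta}\langle X,T_{\theta}\rangle$, which matches \eqref{X(f)wp} literally only when $\ep_3=1$ --- a sign wrinkle inherited from the paper's own statement of the fundamental theorem rather than a flaw in your argument.
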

Using similar arguments, Lemma \ref{structuremodhomo2} becomes 
\begin{lemma}\label{structuremodwp2}
\begin{align}
&(1-F^2_1)\Big(K+\varepsilon\frac{a''}{a}\Big)+\varepsilon_3(F_2^2-F_1^2)H^2\nonumber\\&\hspace{0.5cm}=\left(\frac{a''}{a} -\frac{a'^2}{a^2}+\frac{\ep\,c}{a^2}\right)\Big(\ep(1-(\lambda^2+\mu^2))+(\lambda^2+\mu^2-F_1^ 2)\varepsilon_3f^2\Big),\label{Gausswp3}\\
&F_1e^{-2J\theta}\delta A^a-F_2\nabla H=(\lambda\one+\mu J)\frac{f_{\theta}}{f}(\delta A^a-\nabla H),\label{Codazziwp3}\\
&\Big(f(\lambda\one+\mu J)-f_{\theta}F_1e^{-2J\theta}\Big)AX=f_{\theta}(F_2-F_1e^{-2J\theta})HX\nonumber\\
&\hspace{0.5cm}+\frac{a'}{a}\Big(X-\ep\langle X,T_{\theta}\rangle T_{\theta})-(\lambda\one+\mu J)(X-\ep\langle X,T\rangle T)\Big), \label{Twp3}
\end{align}
\end{lemma}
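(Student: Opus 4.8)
The plan is to reproduce, in the warped-product setting, the three-step argument already carried out for the homogeneous case in Lemma \ref{structuremodhomo2}, treating the equations of Lemma \ref{structureeqmod1wp} one at a time; the only genuinely new feature is keeping track of the warping coefficient $W:=\frac{a''}{a}-\frac{a'^2}{a^2}+\frac{\ep\,\ep_0}{a^2}$ and of the two signs $\ep$ and $\ep_3$ throughout.

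For the Gauss relation \eqref{Gausswp3} I would first insert Lemma \ref{detAtheta} into \eqref{Gausshomonew2}, turning its left-hand side into $(1-F_1^2)\det A-(F_2^2-F_1^2)H^2$. I would then eliminate $\det A$ in favour of $K$ by means of the rewritten Gauss equation recorded just after Theorem \ref{fundtheoremwp}, namely $K=\ep_3\det A-\ep\frac{a''}{a}+W(\ep-\|T\|^2)$, together with the constraint $\|T\|^2+\ep_3 f^2=\ep$, which yields $\ep-\|T\|^2=\ep_3 f^2$ and hence $\det A=\ep_3\big(K+\ep\frac{a''}{a}\big)-Wf^2$. Substituting this and multiplying through by $\ep_3$ reduces the identity to \eqref{Gausswp3}, the $f^2$-terms recombining exactly as in the homogeneous computation via $(1-(\lambda^2+\mu^2))(\ep-\ep_3f^2)+\ep_3 f^2(1-F_1^2)=\ep(1-(\lambda^2+\mu^2))+(\lambda^2+\mu^2-F_1^2)\ep_3f^2$.

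The other two equations are purely formal. For \eqref{Codazziwp3} I would expand $\delta A_\theta$ and $\nabla H_\theta$ from Definition \ref{generalized} and Lemma \ref{htheta}: since $F_1$, $F_2$ and $\mathrm{e}^{-2J\theta}$ are constant along $M$ and $J$ is parallel, one gets $\delta A_\theta=F_1\mathrm{e}^{-2J\theta}\delta A^a+F_2\nabla H$ and $\nabla H_\theta=F_2\nabla H$, while $A=A^a+H\one$ gives $\delta A-2\nabla H=\delta A^a-\nabla H$ and $\delta A_\theta-2\nabla H_\theta=F_1\mathrm{e}^{-2J\theta}\delta A^a-F_2\nabla H$; dividing \eqref{Codazziwpnew} by $f\neq 0$ then produces \eqref{Codazziwp3}. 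For \eqref{Twp3} I would substitute the fundamental relation \eqref{Tcondwp} for $\nabla_X T$ into \eqref{eqforTwpnew}, apply $(\lambda\one+\mu J)$, and collect on the left the terms carrying $AX$; the warping contributions assemble into $\frac{a'}{a}\big((X-\ep\produ{X,T_\theta}T_\theta)-(\lambda\one+\mu J)(X-\ep\produ{X,T}T)\big)$, which is precisely the stated right-hand side.

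The delicate step is the Gauss equation: several copies of $\ep$, $\ep_3$ and $W$ have to be propagated correctly through the substitution of $\det A$ and the normalization $\|T\|^2+\ep_3 f^2=\ep$, and in particular the sign of the $\ep_3(F_2^2-F_1^2)H^2$ term and the recombination of the $f^2$ contributions must be checked with care. Once $A_\theta$, $H_\theta$ and the two expressions for $\nabla_X T$ are in hand, the Codazzi and structure equations follow by bookkeeping alone.
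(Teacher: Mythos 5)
Your proposal follows the paper's own route exactly: the paper disposes of this lemma with the phrase ``using similar arguments'' (pointing back to the proof of Lemma \ref{structuremodhomo2}) together with two indications --- the rewritten Gauss equation $K=\ep_3\det A-\ep\frac{a''}{a}+\big(\frac{a''}{a}-\frac{(a')^2}{a^2}+\frac{\ep\ep_0}{a^2}\big)\ep_3f^2$ for \eqref{Gausswp3}, and the substitution of \eqref{Tcondwp} into \eqref{eqforTwpnew} for \eqref{Twp3} --- and your three steps (Lemma \ref{detAtheta} into \eqref{Gausshomonew2}; the expansions $\delta A_\theta-2\nabla H_\theta=F_1e^{-2J\theta}\delta A^a-F_2\nabla H$ and $\delta A-2\nabla H=\delta A^a-\nabla H$; collecting the $AX$-terms) are precisely these computations written out in full.

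One correction, on the very step you flag as delicate. Carried out carefully, your Gauss substitution does \emph{not} land on \eqref{Gausswp3} as printed. Writing $w=\frac{a''}{a}-\frac{(a')^2}{a^2}+\frac{\ep\ep_0}{a^2}$, inserting $\det A=\ep_3\big(K+\ep\frac{a''}{a}\big)-wf^2$ into $(1-F_1^2)\det A-(F_2^2-F_1^2)H^2=\ep_3\,w\,(1-(\lambda^2+\mu^2))(\ep-\ep_3f^2)$ and multiplying through by $\ep_3$ gives
\[
(1-F_1^2)\Big(K+\ep\frac{a''}{a}\Big)-\ep_3(F_2^2-F_1^2)H^2
= w\Big(\ep(1-(\lambda^2+\mu^2))+(\lambda^2+\mu^2-F_1^2)\ep_3f^2\Big),
\]
so the $H^2$-term carries the coefficient $-\ep_3(F_2^2-F_1^2)$, not $+\ep_3(F_2^2-F_1^2)$. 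This is the sign consistent with the homogeneous analogue \eqref{Gauss3}, which reads $-(F_2^2-F_1^2)H^2$; the printed \eqref{Gausswp3} therefore contains a sign typo (equivalently, its $F_2^2-F_1^2$ should read $F_1^2-F_2^2$). Your right-hand-side recombination identity for the $f^2$-terms is correct, but your assertion that everything ``reduces to \eqref{Gausswp3}'' skips exactly the sign check you promised to perform; you should either flag the typo or state the corrected identity. (The slip is harmless downstream: where \eqref{Gausswp3} is invoked later in the paper, $H=0$, so the coefficient of $H^2$ is irrelevant there.) The Codazzi and structure-equation steps are correct as written and coincide with the paper's displayed computation.
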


\begin{proof}  We used $K=\varepsilon_3\det A- \varepsilon\frac{a''}{a}
 +\left(\frac{a''}{a} -\frac{a'^2}{a^2}+\frac{\ep\,c}{a^2}\right)
 (\ep_3f^2)$ for equation \eqref{Gausswp3}. From \eqref{eqforTwpnew}, we have 
\begin{align*}
(\lambda\one+\mu J)\Big(fAX+\frac{a'}{a}( X-\varepsilon\langle X,T\rangle T)\Big)=&
f_{\theta}(F_1e^{-2J\theta}(A-H\one)X+F_2HX)\\
&+\frac{a'}{a}(X-\ep\langle X,T_{\theta}\rangle T_{\theta}).
\end{align*}
From here, we easily obtain \eqref{Twp3}. 
\end{proof}

Now, pluging in $T$ and $JT$ for $X$ in the last equation we get 
\begin{align*}
\begin{cases}
\Big(f(\lambda\one+\mu J)-f_{\theta}F_1e^{-2J\theta}\Big)AT=f_{\theta}(F_2-F_1e^{-2J\theta})HT\\
+\frac{a'}{a}\Big((1-(\lambda\one+\mu J))T-\ep\lambda\|T\|^2 (\lambda \one+\mu J )T+(\lambda\one+\mu J)(\ep\|T\|^2T)\Big),\\
\Big(f(\lambda\one+\mu J)-f_{\theta}F_1e^{-2J\theta}\Big)AJT=f_{\theta}(F_2-F_1e^{-2J\theta})HJT\\
+\frac{a'}{a}\Big((1-(\lambda\one+\mu J))JT-\ep\mu\|T\|^2 (\lambda \one+\mu J )T\Big),
\end{cases}\end{align*}
and consequently
\begin{align*}
\begin{cases}
\Big(f(\lambda\one+\mu J)-f_{\theta}F_1e^{-2J\theta}\Big)JAT=f_{\theta}(F_2-F_1e^{-2J\theta})HJT\\\hspace{3cm}+\frac{a'}{a}\Big(1-(\lambda\one+\mu J)+\ep(1-\lambda)(\lambda\one +\mu J)\|T\|^2\Big)JT, \\
\Big(f(\lambda\one+\mu J)-f_{\theta}F_1e^{-2J\theta}\Big)AJT=f_{\theta}(F_2-F_1e^{-2J\theta})HJT\\
\hspace{3cm}+\frac{a'}{a}\Big(1-(\lambda\one+\mu J)+\ep J\mu(\lambda\one+\mu J )\|T\|^2\Big)JT.
\end{cases}
\end{align*}
Subtracting these formulas we get
\begin{gather}
\Big(f(\lambda\one+\mu J)-f_{\theta}F_1e^{-2J\theta}\Big)(JAT-AJT) \nonumber \\
=\frac{a'}{a}\Big((\lambda\one+\mu J )\ep(1-\lambda-J\mu)\|T\|^2\Big)JT
\Big). \label{addingTeq}
\end{gather}
And adding them, 
\begin{gather*}
2\big(f(\lambda\one+\mu J)-f_{\theta}F_2\big)HJT \\
=\frac{a'}{a}\Big(2(1-(\lambda\one+\mu J))JT+\ep\big((\lambda\one +\mu J)-(\lambda^2+\mu^2)\big)\|T\|^2JT\Big).
\end{gather*}
Hence we obtain $\mu\Big(\frac{a'}{a}\big(2-\ep\|T\|^2 )+2f H\Big)=0.$ Moreover
\begin{align*}
2(f\lambda -f_{\theta}F_2) H&=\frac{a'}{a}\Big(2(1-\lambda)+\ep(\lambda-(\lambda^2+\mu^2)\|T\|^2\Big)\\
&=\frac{a'}{a}\Big((1-\lambda)\big[2-\ep\|T\|^2\big]+\ep\big[1-(\lambda^2+\mu^2)\big]\|T\|^2\Big)
\end{align*} 
and finally we get the two equations
\begin{align}
&\mu\Big(\frac{a'}{a}\big(2-\ep\|T\|^2 )+2f H\Big)=0\label{eqforH1}\\
2(f\lambda -f_{\theta}F_2) H&=\frac{a'}{a}\Big((1-\lambda)\big[2-\ep\|T\|^2\big]+\ep\big[1-(\lambda^2+\mu^2)\big]\|T\|^2\Big)\label{eqforH2},
\end{align}

\subsubsection{Case $\mu\neq 0$}
If $\mu $ is not $0$, then $ 2f H=-\frac{a'}{a}\big(2-\ep\|T\|^2)$ and $2(f -f_{\theta}F_2) H=\frac{a'}{a}\ep\big(1-\lambda^2-\mu^2\big)\|T\|^2$. Consequently, writing $z_{\theta}:=\lambda\one+\mu J$, we have 

\begin{equation} \label{paluego} 
2f_{\theta}F_2H=-\frac{a'}{a}\big(2-\ep|z_{\theta}|^2\big)\|T\|^2,
\end{equation}
and so 
\[
a\rq{} (2-\ep\|T\|^2)f_{\theta}F_2 = a\rq{}f(2-\ep(\lambda^2+\mu^2)\|T\|^2).
\]

As already seen, the case $a\rq{}=0$ means that either $M$ is contained in a slice or $a$ is constant.

Assuming $a\rq{}\neq 0$, we have $(2-\ep\|T\|^2)f_{\theta}F_2 = f(2-\ep(\lambda^2+\mu^2)\|T\|^2)$. 
Since we have $T\neq 0$, we suppose for a moment that $2-\ep\|T\|^2=0$ at some point of $\mathcal{U}$. Thus,  by \eqref{eqforH1}, we see $H=0$. We get $0=2-\varepsilon(\lambda^2+\mu^2)\|T\|^2 =2(1-\lambda^2-\mu^2)$. This means $\lambda^2+\mu^2=1$.  
Firstly, if $\varepsilon=+1$, then $\|T\|^2=2$, but we recall that $\partial_t=T+fN$, so that $1=\|T\|^2+f^2=2+f^2$, which is a contradition. Secondly, if $\varepsilon=-1$, then $\|T\|^2=-2$. Inserting all the information in \eqref{paluego}, $0=-6$, another contradiction. 
\\

Then, we can assume that $\|T\|^2\neq 2\varepsilon$. 
By the previous section, we can discard the case $f_{\theta}=0$. We arrive to 
\begin{align}\label{eqF_2wp}
F_2=\frac{f}{f_{\theta}}\frac{(2-\ep|z_{\theta}|^2\|T\|^2)}{(2-\ep\|T\|^2)}.
\end{align}
Now plugging $F_2$ in equation \eqref{Codazziwp3} yields 
\begin{align} \label{painful_eq} F_1e^{-2J\theta}\delta A^a-\frac{f}{f_{\theta}}\frac{(2-\ep|z_{\theta}|^2\|T\|^2)}{(2-\ep\|T\|^2)}\nabla H=z_{\theta}\frac{f_{\theta}}{f}(\delta A^a-\nabla H)
\end{align}
Let now $W:=JAT-AJT$. \\

If $W=0$ on an open subset $\mathcal{V}$ of $M$, then by Lemma \ref{remark_cmc}, $\mathcal{V}$ is totally umbilical.  We notice that totally umbilical surfaces which are neither vertical nor horizontal in (warped) products of the form $M^n\times_f I$, with $M^n$ a Riemannian manifold, have been studied and classified in \cite{SV}. In particular the authors prove that such surfaces exist if, and only if, $M^n$ has locally the structure of a warped product, which ensures their existence in our case.

Next, we assume that $\mathcal{U}$ is free of umbilical points. Then, there exist two smooth functions $\alpha,\beta$ defined on $\mathcal{U}$ such that $(\alpha\one +\beta J) W=JT$. Then, by \eqref{addingTeq}  we have
\begin{align*}
(fz_{\theta}-f_{\theta} F_1e^{-2J\theta})W=\frac{a'}{a}\left(z_{\theta}\ep(1-z_{\theta})\|T\|^2\right)(\alpha\one +\beta J) W.
\end{align*}
and consequently $fz_{\theta}-\frac{a'}{a}\left(z_{\theta}\ep(1-z_{\theta})\|T\|^2\right)(\alpha\one +\beta J)=f_{\theta} F_1e^{-2J\theta}$. Hence replacing the coefficient of $\delta A_a$ on the right handside of equation \eqref{painful_eq} we get
\begin{align*}
f_{\theta}F_1e^{-2J\theta}\delta A^a=-\frac{a'}{a}\Big(z_{\theta}\ep(1-z_{\theta})\|T\|^2\Big)(\alpha\one+\beta J)\delta A^a+fz_{\theta}\delta A^a
\end{align*}
and therefore
\begin{align}\label{polynomial_in_z}
f^2&\frac{(2-\ep|z_{\theta}|^2\|T\|^2)}{(2-\ep\|T\|^2)}\nabla H+\ep\ep_3z_{\theta}(1-\ep|z_{\theta}|^2\|T\|^2)(\delta A^a-\nabla H)\\&=-\frac{a'f}{a}\Big(z_{\theta}\ep(1-z_{\theta})\|T\|^2\Big)(\alpha\one+\beta J)\delta A^a+f^2z_{\theta}\delta A^a, \nonumber
\end{align}
which is a cubic polynomial of the form $P(z_{\theta},\bar{z_{\theta}})=c_0+c_1z_{\theta}+c_2|z_{\theta}|^2+c_3z_{\theta}^2+c_4|z_{\theta}|^2z_{\theta}=c_0+z_{\theta}(c_1+c_2\bar{z_{\theta}}+c_3z_{\theta}+c_4|z_{\theta}|^2)=0.$ 
Now,  we want $z_{\theta}$ to be a smooth family of solutions. In order for the solution set $z_{\theta}$ to contain a curve, $P$ needs to share a common factor with its conjugate.  But, in that case either the polynomial is identically 0, or it is easy to check that the polynomial must have the following irreducible factorization $P(z_{\theta},\bar{z_{\theta}})=(z_{\theta}+d_0)(d_1+d_2z_{\theta}+d_3|z_{\theta}|^2)$. The first factor cannot be a multiple of its conjugate, and the second factor is a multiple of its conjugate if, and only if, $d_2=0$, and consequently the curve is a circle of radius $r=\sqrt{-\frac{d_1}{d_3}}$ centered at the origin. Notice that the polynomial could also be quadratic, but then the same term in $z_{\theta}^2$ has to vanish, additionally to the terms in $z_{\theta}$ and $|z_{\theta}|^2z_{\theta}$, so that we can reduce our study to the previous case. Now by equation \eqref{polynomial_in_z} \[d_2=\frac{a'f}{a}\ep\|T\|^2(\alpha\one+\beta J)\delta A^a=0.\]
But this is satisfied if, and only if, one of the following two cases hold.
\begin{enumerate}
\item $a'=0$ and the ambient manifold is in fact a product. Moreover,  by equation \eqref{eqforH1}, we have that $H=0$. In this case we get from equations \eqref{Gausshomonew2} and \eqref{Gausswp3} that $\det A=\det A_{\theta}$, which by Lemma \ref{detAtheta} is equivalent to having $F_1=1$ and finally by equations \eqref{Codazziwp3} and \eqref{Twp3} we can conclude that there exists a family if, and only if, $f_{\theta}=f$ and $\lambda\one+\mu J=e^{-2J\theta}$, which is exactly the usual  associated family used by Eschenburg. Conversely we see easily that if $A_{\theta}=e^{-J\theta}Ae^{J\theta}$ and $T_{\theta}=e^{-2J\theta}T$ and $H=0$, the structure equations are all satisfied, recovering hence Daniel's minimal family when the warped product is Riemannian (see \cite{D2}) and Roth's result when the warped product is Lorentzian (see \cite{R}).

\item $\delta A^a=0$, then by equation \eqref{Codazziwp3} $F_2\nabla H=(\lambda\one+\mu J)\frac{f_{\theta}}{f}\nabla H$, which means that $H$ is constant since $\mu\neq 0$ and $f\neq 0$. Hence it is easy to see that $d^{\nabla}A=0$. By the Codazzi equation in Theorem \ref{fundtheoremwp} this holds if, and only if, $(\frac{a''}{a}-\frac{a'^2}{a^2}+\frac{\ep c}{a^2})=0$ or $\|T\|=0$. Since $T\neq 0$, the remaining situation is when $a''a-(a')^2+\ep c=0$, that is to say, the case of space forms that we excluded at the beginning.
\end{enumerate}
\subsubsection{Case $\mu =0$ }
In this case
\[\lambda\nabla_XT=\lambda\frac{a'}{a}(X-\varepsilon\produ{X,T}T)=\frac{a'}{a}(X-\varepsilon\produ{X,T_{\theta}}T_{\theta}))=\frac{a'}{a}(X-\varepsilon\lambda^2\produ{X,T}T).\]
Hence pluging in $T$ and $JT$ for $X$ we get
\[(\lambda-1)\frac{a'}{a} T=\frac{a'}{a}\varepsilon\lambda(1-\lambda)\|T\|^2T,\quad
 \frac{a'}{a} \lambda JT=\frac{a'}{a} JT\]
and consequently either $a'=0$ and the ambient manifold is a product or $\lambda=1$. 
But in both cases we get from equation \eqref{addingTeq} that either the vector field $W=AJT-AJT=0$, which implies that $M$ is totally umbilical, or $\lambda f=f_{\theta}F_1\cos(2\theta)$ and $0=f_{\theta}F_1\sin(2\theta)$ for all angles $\theta$ in an interval around zero, which is a contradiction.

\subsection{The case $T=0$}

Since $f=\pm 1$ globally, we immediately obtain $\partial_t=\pm N$. In other words, $\chi$ is a slice. And all slices are totally umbilical. 

\section*{Acknowledgments}

The second author has been partially financed by the Spanish Ministry of Economy and Competitiveness and European Regional Development Fund (ERDF), project MTM2016-78807-C2-1-P. In addition, the authors would like to thank the referee for some useful comments.

\end{document}